\newtheorem{thm}{Theorem}[section]
\newtheorem{lem}[thm]{Lemma}
\newtheorem{prop}[thm]{Proposition}
\newtheorem{cor}[thm]{Corollary}
\newtheorem{dfn}[thm]{Definition}
\newtheorem{ques}[thm]{Question}
\newtheorem{ex}[thm]{Example}
\newtheorem{exs}[thm]{Examples}
\newtheorem{rmk}[thm]{Remark}
\newtheorem{rmks}[thm]{Remarks}
\renewcommand{\rm}[1]{\mathrm{#1}}
\renewcommand{\cal}[1]{\mathcal{#1}}
\newcommand{\bbE}{\boldsymbol{\mathsf{E}}}
\newcommand{\bbN}{\mathbb{N}}
\newcommand{\bbR}{\mathbb{R}}
\newcommand{\bbZ}{\mathbb{Z}}
\newcommand{\rmH}{\mathrm{H}}
\renewcommand{\d}{\mathrm{d}}
\newcommand{\rme}{\mathrm{e}}
\newcommand{\rmh}{\mathrm{h}}
\newcommand{\B}{\mathcal{B}}
\newcommand{\F}{\mathcal{F}}
\renewcommand{\P}{\mathcal{P}}
\newcommand{\Q}{\mathcal{Q}}
\newcommand{\R}{\mathcal{R}}
\renewcommand{\O}{\Omega}
\newcommand{\eps}{\varepsilon}
\newcommand{\g}{\gamma}
\renewcommand{\l}{\lambda}
\renewcommand{\k}{\kappa}
\renewcommand{\o}{\omega}
\newcommand{\s}{\sigma}
\newcommand{\ol}[1]{\overline{#1}}
\newcommand{\fin}{\nolinebreak\hspace{\stretch{1}}$\lhd$}
\renewcommand{\to}{\longrightarrow}
\renewcommand{\phi}{\varphi}
\renewcommand{\Pr}{\rm{Prob}}
\newcommand{\TC}{\mathrm{TC}}
\newcommand{\DTC}{\mathrm{DTC}}
\newcommand{\rmD}{\mathrm{D}}
\newcommand{\rmI}{\mathrm{I}}
\begin{document}
\pagestyle{myheadings}

\title{Multi-variate correlation and\newline mixtures of product measures}

\author{Tim Austin}

\contact{Tim}{Austin}{UCLA Mathematics Department, Box 951555, Los Angeles CA 90095. U.\,S.\,A.}{tim@math.ucla.edu}

\markboth{T. Austin} {Correlation and mixtures of products}

\maketitle

\begin{abstract}
Total correlation (`TC') and dual total correlation (`DTC') are two classical ways to quantify the correlation among an $n$-tuple of random variables.  They both reduce to mutual information when $n=2$.

The first part of this paper sets up the theory of TC and DTC for general random variables, not necessarily finite-valued.  This generality has not been exposed in the literature before.

The second part considers the structural implications when a joint distribution $\mu$ has small TC or DTC.  If $\mathrm{TC}(\mu) = o(n)$, then $\mu$ is close to a product measure according to a suitable transportation metric: this follows directly from Marton's classical transportation-entropy inequality.  If $\mathrm{DTC}(\mu) = o(n)$, then the structural consequence is more complicated: $\mu$ is a mixture of a controlled number of terms, most of them close to product measures in the transportation metric.  This is the main new result of the paper.
\end{abstract}

\keywords{total correlation, dual total correlation, transportation inequalities, mixtures of products}

\classification{60B99, 60G99, 62B10, 94A17}

\setcounter{tocdepth}{1}


\section{Introduction}

Let $X_1,\dots,X_n$ be an $n$-tuple of random variables.  When $n=2$, the mutual information $\rmI(X_1\,;\,X_2)$ is a canonical way to quantify the dependence between them.  Once $n\geq 3$, mutual information can be generalized in several different ways, suitable for different purposes.

This paper focuses on two of these.  If $X_1$, \dots, $X_n$ are finite-valued, then their `total correlation' (`TC') is
\[\TC(X_1\,;\,\dots\,;\,X_n) = \Big(\sum_{i=1}^n\rmH(X_i)\Big) - \rmH(X_1,\dots,X_n),\]
and their `dual total correlation' (`DTC') is
\[\DTC(X_1\,;\,\dots\,;\,X_n) := \rmH(X_1,\dots,X_n) - \sum_{i=1}^n \rmH(X_i\,|\,X_1,\dots,X_{i-1},X_{i+1},\dots,X_n).\]
These definitions are discussed more carefully in Subsection~\ref{subs:dfns}, and extended to general random variables as suprema over quantizations.

Part I of the present paper builds up the basic theory of TC and DTC.  The main results in this part are some identities and inequalities relating TC, DTC, Shannon entropy, and mutual information. These can mostly be seen as generalizations of the chain rule and monotonicity properties of mutual information.  We take care to define and study TC and DTC for general random variables, not just finite-valued ones, and this introduces various additional technicalities.  In handling these technicalities we extend foundational work of Kolmogorov, Dobrushin, Gelfand, Yaglom and Perez on mutual information.

The proofs in Part I are quite routine: a large fraction of the work boils down to applications of the chain rule.  Many of these calculations have been done before.  I include the references I know, but suspect that more lie buried in the literature.

Both TC and DTC are non-negative, and zero only in the case of independent random variables.  One can therefore look for stability results for these quantities: does a small value of TC or DTC imply some `structure' which is close to independence?  This is the topic of Part II of the paper.

In our main results of this kind, `closeness' of distributions is in the sense of transportation metrics.  Suppose that each $X_i$ takes values in a complete and separable metric space $(K_i,d_{K_i})$, and assume that each $d_{K_i}$ has diameter at most $1$.  We endow the product $\prod_i K_i$ with the normalized Hamming average of these metrics:
\begin{equation}\label{eq:Hamming}
d_n(x,y) := \frac{1}{n}\sum_{i=1}^n d_{K_i}(x_i,y_i) \qquad \Big(x,y \in \prod_iK_i\Big).
\end{equation}
If each $K_i$ is just a finite set endowed with the discrete metric, then $d_n$ is the normalized Hamming metric on $\prod_i K_i$.  This case already involves most of the ideas that we need.  The case of other metrics on finite sets is easily reduced to this one, since then each $d_{K_i}$ is bounded above by the discrete metric on $K_i$, and the results for the latter imply those for the former.  However, if one of the spaces $K_i$ is uncountable, then switching from $d_{K_i}$ to the discrete metric sacrifices separability and introduces new Borel sets on which our measures may not be defined.  For the sake of this case we formulate and prove our results for general metrics of diameter at most $1$.

Using $d_n$, we endow the set $\Pr(\prod_iK_i)$ of all probability distributions with the transportation metric:
\begin{equation}\label{eq:trans-met}
\ol{d_n}(\mu,\nu) := \inf_\l \int d_n(x,y)\,\l(\d x,\d y) \qquad \Big(\mu,\nu \in \Pr\Big(\prod_iK_i\Big)\Big),
\end{equation}
where $\l$ ranges over all couplings of $\mu$ and $\nu$.

For TC, existing results can easily be re-cast as a structural conclusion of the desired kind: if $\TC = o(n)$, then the joint distribution $\mu$ of $X_1$, \dots, $X_n$ is close in $\ol{d_n}$ to a product measure.  This is a corollary of Marton's classical transportation-entropy inequality~\cite{Mar86,Mar96}: see Proposition~\ref{prop:Marton} and Corollary~\ref{cor:Marton} below.

For DTC the situation is more complicated.  We always have
\[\TC \leq (n-1)\cdot \DTC\]
(see Lemma~\ref{lem:basic-ineqs}), so the result mentioned above for TC can be applied if $\DTC = o(1)$.  Outside this range, $\mu$ need not be close to a product: indeed, any mixture of $k$ product measures always has DTC at most $\log k$ (see Proposition~\ref{prop:DTC-approx-concav}).

However, it turns out that this is roughly the only possibility over a much larger range of DTC-values: if $\DTC = o(n)$, then $\mu$ is close in $\ol{d_n}$ to a `low-complexity' mixture of product measures.  This is the main result of the paper.

\paragraph{Theorem A.}
	Fix a parameter $\delta > 0$ and let $\mu \in \Pr(\prod_i K_i)$.  If $\DTC(\mu) \leq \delta^3 n$, then $\mu$ may be written as a mixture
	\[\mu = \int_L \mu_y\,\nu(\d y)\]
so that
	\begin{enumerate}
		\item[(a)] the mutual information in the mixture satisfies $\rmI(\nu,\mu_\bullet) \leq \DTC(\mu)$, and
		\item[(b)] there is a measurable family $(\xi_y:\ y \in L)$ of product measures on $\prod_i K_i$ such that
		\[\int_L \ol{d_n}(\mu_y,\xi_y)\,\nu(\d y) < 2\delta.\]
	\end{enumerate}

In this statement, the `mutual information in the mixture' $\rmI(\nu,\mu_\bullet)$ is the mutual information between $(X_1,\dots,X_n) \sim \mu$ and the mixing parameter $y \sim \nu$ when they are coupled by the kernel ${(\mu_y:\ y\in L)}$: see Subsection~\ref{subs:mix-mut-inf}.

In the proof of Theorem A, the mixture representing $\mu$ is obtained by conditioning on a small subset $S$ of the coordinates in $\{1,2,\dots,n\}$, resulting in the explicit choice $L := \prod_{i \in S}K_i$.

The bounds provided by Theorem A do not depend on the alphabets $K_i$ at all.  However, Theorem A has variants in which such a dependence does appear.  For instance, Theorem~\ref{thm:A'} below gives a nontrivial conclusion in the range $\DTC(\mu) \ll \delta^2 n$, so it is slightly less restrictive than Theorem A, but the alphabets $K_i$ must be finite.  It gives an alternative to part (a) of Theorem A which depends on the cardinalities $|K_i|$.  See Subsection~\ref{subs:number-of-terms} for more discussion.

Because our main structural results give a description only up to approximations in $\ol{d_n}$, they are not completely satisfactory.  For TC, the results described above give:
\begin{align*}
&\hbox{product measure} \\
&\quad \Longrightarrow \qquad \TC = 0  \qquad \hbox{(standard calculation)}\\
&\quad \qquad \Longrightarrow \qquad \hbox{small TC}  \qquad \hbox{(trivially)}\\
&\quad \qquad \qquad \Longrightarrow \qquad \hbox{\emph{near}-product measure} \qquad \hbox{(Corollary~\ref{cor:Marton})},
\end{align*}
where `nearness' refers to an approximation in $\ol{d_n}$. This leaves open the possibility that some near-product measures have small TC while others have large TC.  In fact we can say a little more about this gap, because if each $(K_i,d_{K_i})$ is a finite set with its discrete metric then TC enjoys some `continuity' in the metric $\ol{d_n}$.  However, the quality of this continuity deteriorates as the cardinalities $|K_i|$ increase: see Lemma~\ref{lem:TC-robust}.  So, if each $K_i$ is finite, then any measure sufficiently close to a product must have small TC, but the necessary closeness here also depends on those cardinalities. This still leaves a gap in our understanding when some of the values $|K_i|$ are large or infinite, and indeed there are measures which occupy that gap and have large TC: see Example~\ref{ex:near-prod-big-TC}.

Similarly, for DTC we have
\begin{align*}
&\hbox{low-complexity mixture of products} \\
&\quad \Longrightarrow \qquad \hbox{small DTC}  \qquad \hbox{(Proposition~\ref{prop:DTC-approx-concav})}\\
&\quad \qquad \Longrightarrow \qquad \hbox{low-complexity mixture of \emph{near}-products} \qquad \hbox{(Theorem A)}.
\end{align*}
But it remains unclear when a mixture of near-products has small or large DTC.  Unlike for TC, there seems to be no useful `continuity' for DTC at all: Example~\ref{ex:near-prod-big-DTC} has transportation distance less than $1/n$ from a product measure, but has large DTC.  

\subsection*{Relation to previous work}

The study of measures of multi-variate correlation began with McGill's notion of `interaction information'~\cite{McG54}.  Since then, numerous other proposals have been studied in the information theory literature, and numerous identities relating them have been uncovered.  TC was first studied by Watanabe in~\cite{Wat60}, and DTC by Han in~\cite{Han75}.  Confusingly, both of these quantities and several of the others seem to have been rediscovered multiple times, and given a new name each time.  Here we use Watanabe's and Han's original names.

Basic theoretical work in this area has been driven by a search for ways to identify different kinds of dependence structure among several random variables: for instance, disjoint subcollections of them exhibiting some conditional independence.  See~\cite{PeaPaz87,Per77} for some early analyses along these lines, and~\cite{StuVej98} for a more recent overview.

Much of the literature on notions of multi-variate correlation concerns their application in other branches of science.  The paper~\cite{TimAlfFleBeg14} recalls several of these notions and discusses the practical matter of choosing one for the sake of interpreting different kinds of experimental data.  The note~\cite{Crooks--mutinfnote} gives a quicker survey of the many options.  Both of these references contain a more complete guide to the literature.  A concrete example of an application of TC can be found in~\cite[Example 4]{CsiNar04}, where it provides an upper bound on the secret key capacity in a certain network communication model of secrecy generation.

TC and DTC retain one basic property of mutual information which is especially relevant to our work: if $X_i$ determines $Y_i$ for each $i$, then
\[\TC(X_1\,;\,\dots\,;\,X_n) \geq \TC(Y_1\,;\,\dots\,;\,Y_n),\]
and similarly with DTC in place of TC.  In particular, TC and DTC are both non-negative. This non-negativity is included in a more general family of inequalities discovered by Han~\cite{Han78}; see~\cite[Theorem 17.6.1]{CovTho06} for a textbook treatment. Some other generalizations of mutual information, including McGill's interaction information and several others listed in~\cite{Crooks--mutinfnote,TimAlfFleBeg14}, lose this monotonicity property.  They can vanish for quite highly correlated random variables, depending on the nature of the correlation.

However, TC and DTC are not the only non-negative measures of multi-variate correlation.  On the contrary, they can be seen as the two simplest members of a large family, all obtained as the gaps in different entropy inequalities, and all non-negative as a result.  These include the other inequalities from~\cite{Han78}, which in turn are all special cases of an inequality due to Shearer (also from about 1978, but first published in~\cite{ChungGraFraShe86}). Fujishige interpreted Han's inequalities using polymatroids in~\cite{Fuj78}, and explicitly investigated possible values for the gaps. Further refinements of Shearer's inequality have recently been investigated by Madiman and Tetali~\cite{MadTet10} and Balister and Bollob\'as~\cite{BalBol12}.  We restrict attention to TC and DTC in most of this paper, but return briefly to those other inequalities in the final section.

Beyond monotonicity, our specific interest in TC and DTC stems from previous structural results about those quantities.  The TC of a joint distribution $\mu$ is equal to the Kullback--Leibler divergence between $\mu$ and the product of its marginals (see equation~\eqref{eq:TC-and-D} below).  In this guise, it appears in Csisz\'ar's paper~\cite{Csi84} on generalizing Sanov's theorem and conditional limit theorems, and is implicitly at work in Marton's concentration inequality for product measures (see Section~\ref{sec:TC-and-prod} below).

DTC has received less theoretical attention than TC.  However, in the recent paper~\cite{Aus--WP} it plays a crucial role in a new decomposition theorem for measures on product spaces, of a similar flavour to Theorem A but with a different conclusion.

In~\cite{Aus--WP}, a measure on a product space is decomposed into a mixture whose terms mostly satisfy a kind of concentration inequality called a T-inequality.  Following the terminology in~\cite{Aus--WP}, a measure $\mu
\in \Pr(\prod_i K_i)$ satisfies T$(a,r)$ for some parameters $a,r > 0$ if we have
\begin{equation}\label{eq:T-ineq}
\int \rme^{a f}\,\d\mu \leq \exp\Big(a \int f\,\d\mu + a r\Big)
\end{equation}
whenever $f:\prod_i K_i \to \bbR$ is $1$-Lipschitz for the metric $d_n$.  (To be precise, the notation T$(a,r)$ is used in~\cite{Aus--WP} for a certain transportation-entropy inequality, which is then proven equivalent to~\eqref{eq:T-ineq} via a version of the Bobkov--G\"otze equivalence: see~\cite[Subsection 5.3]{Aus--WP}.)

The decomposition result in~\cite{Aus--WP} is initially formulated for TC (see~\cite[Theorem B]{Aus--WP}) in order to meet the needs of an application to ergodic theory. But that version is derived from the analogous result for DTC~\cite[Theorem 7.1]{Aus--WP}.  Here are those results:

\begin{thm}\label{thm:from-WP}
 For any $\eps,r > 0$ there exist $c,\kappa > 0$ for which the following holds.  Any $\mu \in \Pr(\prod_i K_i)$ can be written as a mixture
\[\mu = p_1\mu_1 + \cdots + p_m\mu_m\]
so that
\begin{itemize}
	\item[(a)] $m\leq c\cdot \exp(c\cdot \TC(\mu))$,
	\item[(b)] $p_1 < \eps$, and
	\item[(c)] the measure $\mu_j$ satisfies T$(\k n,r)$ for each $j=2,3,\dots,m$.
	\end{itemize}
The same conclusion holds with DTC in place of TC, except the dependence of $c$ and $\kappa$ on $\eps$ and $r$ may be different.
\end{thm}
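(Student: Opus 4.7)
The plan is to exhibit an auxiliary random variable $Y$ on some space $L$, with law $\nu$, such that the conditional laws $\mu_y := \mu(\cdot \mid Y = y)$ serve as the mixture components and satisfy (i) $\rmI(X; Y) \le \DTC(\mu)$, which gives (a) since $\rmI(\nu,\mu_\bullet) = \rmI(X;Y)$ for this coupling; and (ii) $\TC(X \mid Y) := \int_L \TC(\mu_y)\,\nu(dy) \le 2\delta^2 n$. Granted such a $Y$, take $\xi_y := \prod_i (\mu_y)_i$. Marton's transportation-entropy inequality (Corollary~\ref{cor:Marton}) gives $\ol{d_n}(\mu_y, \xi_y) \le \sqrt{\TC(\mu_y)/(2n)}$ pointwise, and Jensen's inequality yields
\[\int_L \ol{d_n}(\mu_y, \xi_y)\,\nu(dy) \;\le\; \sqrt{\TC(X \mid Y)/(2n)} \;\le\; \delta \;<\; 2\delta,\]
so (b) holds. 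Everything therefore reduces to producing $Y$.

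The governing identity is the chain-rule consequence
\[\TC(X) - \TC(X \mid Y) \;=\; \sum_{i=1}^n \rmI(X_i; Y) - \rmI(X; Y),\]
which quantifies the trade-off: investing $\rmI(X; Y)$ units of mutual information reduces the conditional total correlation by at most $(n-1)\,\rmI(X; Y)$, the extremal ratio being attained when $Y$ is roughly equally informative about every coordinate $X_i$. Combining this extremal ratio with Lemma~\ref{lem:basic-ineqs} ($\TC(\mu) \le (n-1)\DTC(\mu)$), a budget of $\rmI(X; Y) \approx \DTC(\mu) \le \delta^3 n$ in principle suffices to drive $\TC(X \mid Y)$ down to $O(\delta^2 n)$, matching (ii) and the sharp constant in (a).

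I would realise this through an iterative greedy procedure. Start with $Y_0$ trivial; at step $k$, while $\TC(X \mid Y_k) > 2\delta^2 n$, use the conditional identity $\sum_i \rmI(X_i; X_{-i} \mid Y_k) = \TC(X \mid Y_k) + \DTC(X \mid Y_k)$ (both sides $\ge \TC(X\mid Y_k)$) to locate a coordinate $i$ for which $\rmI(X_i; X_{-i} \mid Y_k)$ is large, and adjoin to $Y_k$ some summary $Z_{k+1}$ of $X_i$ whose reduction-to-cost ratio $[\sum_j \rmI(X_j; Z_{k+1} \mid Y_k) - \rmI(X; Z_{k+1} \mid Y_k)] / \rmI(X; Z_{k+1} \mid Y_k)$ is near the extremal $n-1$. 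Telescoping the per-step reductions and costs then produces a $Y$ with $\TC(X \mid Y) \le 2\delta^2 n$ and total cost $\rmI(X; Y) \le \DTC(\mu)$.

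The principal obstacle is making the per-step adjunction alphabet-independent: adjoining the whole coordinate $X_i$ costs $\rmH(X_i \mid Y_k)$, which is unbounded for general Polish $K_i$, and the resulting cruder construction only yields the alphabet-dependent variant Theorem~\ref{thm:A'}. For the alphabet-free Theorem A, $Z_{k+1}$ must instead be a carefully chosen ``soft'' function of $X_i$ --- say, a quantization at an appropriate scale, or the output of a controlled noisy channel --- with bounded mutual information per step but still near-optimal reduction-to-cost ratio. An alternative and probably cleaner route is to bypass explicit construction and produce $Y$ non-constructively, by minimising $\TC(X \mid Y)$ over all joint laws of $(X,Y)$ satisfying $\rmI(X; Y) \le \DTC(\mu)$ and analysing the extremiser via the identity above together with the duality between relative entropy and cumulant generating functions.
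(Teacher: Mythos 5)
Your proposal does not address the statement in question. The statement you were asked to prove is Theorem~\ref{thm:from-WP}, which asserts a decomposition into \emph{finitely many} terms $\mu = p_1\mu_1 + \cdots + p_m\mu_m$ with an explicit exponential bound $m \leq c\exp(c\cdot\TC(\mu))$ on the number of terms, one exceptional small-weight term $p_1 < \eps$, and the remaining terms satisfying a T-inequality T$(\kappa n, r)$. Crucially, this theorem holds for \emph{every} value of $\TC$ or $\DTC$, with no smallness hypothesis. What you prove instead is Theorem~A: a decomposition with mutual information $\rmI(\nu,\mu_\bullet) \leq \DTC(\mu)$ and terms $\ol{d_n}$-close (on average) to product measures, under the hypothesis $\DTC(\mu) \leq \delta^3 n$. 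These conclusions are genuinely different: a T-inequality is a much weaker property than being $\ol{d_n}$-close to a product (the paper discusses this explicitly, citing examples where measures far from any product in $\ol{d_n}$ still satisfy good T-inequalities), and the quantitative regimes are different. Nothing in your argument produces a finite mixture, a small-weight exceptional term, or a T-inequality.

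It is also worth knowing that the present paper does not prove Theorem~\ref{thm:from-WP} at all; it is quoted directly from~\cite[Theorems B and 7.1]{Aus--WP}, and the \qed at the end of its statement marks it as cited rather than proved. The switch from TC to DTC that the theorem requires involves a ``decrement'' phenomenon under a certain splitting operation that has no counterpart in the material you develop.

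Finally, even regarded as a blind attempt at Theorem~A, your argument has an acknowledged gap: the iterative greedy construction of $Y$ is never made rigorous, and you flag the alphabet-independence of each per-step adjunction as unresolved, offering only speculative repairs (soft quantizations, noisy channels, a non-constructive extremiser). The paper's proof of Theorem~A avoids iteration altogether: Lemma~\ref{lem:DTC-ave}, proved by averaging the chain-rule expansion of DTC over random orderings of $[n]$ and applying Markov's inequality, produces a single subset $S \subseteq [n]$ with $|S| \leq \DTC(\mu)/\delta^2 \leq \delta n$ and $\TC(X\,|\,X_S) \leq \delta^2 |[n]\setminus S|$. Conditioning on $X_S$, the mutual information of the resulting mixture (after projecting to $S^{\rm c}$) is exactly $\rmI(X_S\,;\,X_{S^{\rm c}}) \leq \DTC(\mu)$ by Corollary~\ref{cor:clumping}, and Marton's inequality applied coordinate-wise on $S^{\rm c}$ gives the $\ol{d_n}$-approximation, with the cost $|S|/n \leq \delta$ absorbed into the final estimate. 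This is why the hypothesis is $\delta^3 n$ rather than $\delta^2 n$: one power of $\delta$ is spent ensuring $|S|/n$ is small. Your heuristic identity-based argument correctly identifies the budget trade-off but does not close the loop.
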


  The switch from TC to DTC is an essential idea in the proof of Theorem~\ref{thm:from-WP} in~\cite{Aus--WP}. DTC exhibits a `decrement' under a certain splitting operation on measures, and this phenomenon is central to the proof of Theorem~\ref{thm:from-WP}. It seems to have no analog for TC.  See~\cite[Sections 6 and 7]{Aus--WP}.

  This ability of DTC to find the structure in Theorem~\ref{thm:from-WP} was one of the main discoveries in~\cite{Aus--WP}.  The present paper is largely motivated by that discovery, although the proof of Theorem A above is quite different from the arguments in~\cite{Aus--WP}, and much shorter. Aside from Theorem~\ref{thm:from-WP}, the literature contains few stability or structural results based on DTC (another possible example is the main result of~\cite{EllFriKinYeh16}).  The present paper fills in some more of this picture.

Any product measure on $\prod_i K_i$ satisfies T$(8rn, r)$ for all $r > 0$.  This is essentially McDiarmid's inequality~\cite{McD89}, and is equivalent to Marton's transportation-entropy inequality (Proposition~\ref{prop:Marton} below) via the Bobkov--G\"otze equivalence.  In addition, perturbations in $\ol{d_n}$ preserve T-inequalities up to some fiddly trimming and a slight deterioration in the constants (see~\cite[Proposition 5.9]{Aus--WP}), so near-product measures still satisfy fairly good T-inequalities up to that trimming.  But many measures that are far from any product measure in $\ol{d_n}$ also satisfy good T-inequalities: see~\cite[Subsection 5.2]{Aus--WP} for examples and more discussion.  For this reason, the T-inequalities promised by part (c) of Theorem~\ref{thm:from-WP} are strictly weaker than the `near-product' structure obtained in part (b) of Theorem~A.

On the other hand, Theorem~\ref{thm:from-WP} applies for any value of TC or DTC, and part (a) of that theorem gives a straightforward exponential bound in this parameter. These features are crucial for the application of Theorem~\ref{thm:from-WP} in~\cite{Aus--WP}.  By contrast, the results of the present paper give nothing at all unless TC or DTC is sufficiently small compared to $n$.  When TC and DTC are too large for the results of the present paper to apply, Theorem~\ref{thm:from-WP} remains applicable and possibly valuable.  A relevant example is given in~\cite[Example 5.4]{Aus--WP}. It is a measure on $A^n$ for a large finite alphabet $A$ which (i) has both TC and DTC of order $n$, but small compared to $n\log |A|$, (ii) already satisfies a good T-inequality, but (iii) cannot be written as a mixture of near-product measures with any meaningful control on the complexity of the mixture.  Clearly Theorem A of the present paper cannot be extended to cover that example.

Let us describe the choice between these different structural results more quantitatively in case each $K_i$ is finite, say of size at most $k$. For TC, Theorem~\ref{thm:from-WP} is non-trivial when $\TC = o(n\log k)$ --- if TC is larger than this, then we might as well partition $\prod_i K_i$ into singletons.  But the simpler result of Corollary~\ref{cor:Marton} below takes over when $\TC = o(n)$.  As a result, the TC-part of Theorem~\ref{thm:from-WP} is really valuable only when $k$ is large.  This was already remarked in~\cite{Aus--WP} following the statement of~\cite[Theorem B]{Aus--WP}.

The analogous discussion for DTC was left incomplete in~\cite{Aus--WP}.  Theorem A from the present paper completes it, up to approximations in $\ol{d_n}$.  As above, Theorem~\ref{thm:from-WP} is non-trivial when $\DTC = o(n \log k)$, but now Theorem A takes over when $\DTC = o(n)$.  As for TC, this means that the DTC-part of Theorem~\ref{thm:from-WP} is really valuable only when $k$ is large.

These different structural results from~\cite{Aus--WP} and the present paper are summarized in this table:

\vspace{7pt}

\begin{tabular}{|l|p{200pt}|}
	\hline
	Range where nontrivial & Conclusion\\ \hline
	$\TC = o(n)$ & $\ol{d_n}$-close to a product\\ \hline
	$\phantom{\TC} = o(n\log k)$ & mixture with mut.inf. $= O(\TC)$, most $\phantom{booooo}$ terms concentrated\\ \hline
	$\DTC = o(1)$ & $\ol{d_n}$-close to a product\\ \hline
	$\phantom{\DTC} = o(n)$ & mixture with mut.inf. $= O(\DTC)$, most $\phantom{booooo}$ terms $\ol{d_n}$-close to products\\ \hline
	$\phantom{\DTC} = o(n\log k)$ & mixture with mut.inf. $= O(\DTC)$, most $\phantom{booooo}$ terms concentrated\\
	\hline
\end{tabular}

\subsection*{Relation to nonlinear large deviations}

Several recent papers in probability have begun to develop a new theory of `nonlinear large deviations', starting with Chatterjee and Dembo's work in~\cite{ChaDem16}.  This theory includes examples of measures for which Theorem A is nontrivial.

Let $(X_1,\dots,X_n)$ be a tuple of independent random variables, say taking values in $K_1\times \cdots \times K_n$ and having joint distribution $\l_1\times \cdots\times \l_n$. This new theory considers functions $f(X_1,\dots,X_n)$ which are more complicated than sums of functions of the individual coordinates (this is the meaning of `nonlinear'), and seeks conditions on $f$ under which one can still obtain good estimates on probabilities of large deviations of $f(X_1,\dots,X_n)$ from its mean.  Problems of this kind occur naturally in the study of triangle- or other subgraph-counts in Erd\H{o}s--R\'enyi random graphs, and in questions of a similar flavour about arithmetic progressions in random arithmetic sets.

After suitably modifying the function $f$, this large deviations problem can be translated into that of estimating the partition function $Z = \int \rme^f\,\d(\l_1\times \cdots \times \l_n)$ that normalizes the Gibbs measure
\[\mu(\d x) := \frac{\rme^{f(x)}}{Z}\l_1(\d x_1)\cdots \l_n(\d x_n).\]
In~\cite{ChaDem16}, Chatterjee and Dembo give a good approximation to $Z$ when $K_i = \{0,1\}$, $\l_i$ is uniform for each $i$, and $f$ satisfies a `low complexity' assumption on its `discrete gradients'.  When $K_i = \{0,1\}$ for each $i$, the discrete gradient $\nabla_xf$ at a point $x \in \{0,1\}^n$ is the vector $(\partial_i f(x))_{i=1}^n$, where
\[\partial_i f(x) = f(x_1,\dots,x_{i-1},1,x_{i+1},\dots,x_n) - f(x_1,\dots,x_{i-1},0,x_{i+1},\dots,x_n).\]
The function $f$ is linear if and only if $\nabla_x f$ is the same vector for all $x$.  Beyond this case, Chatterjee and Dembo consider functions $f$ for which the set of discrete gradients ${\{\nabla_x f:\ x \in \{0,1\}^n\}}$ can be covered by relatively few (intuitively, $\exp(o(n))$) small-radius balls in $\bbR^n$ --- this is the meaning `low complexity'.  Chatterjee and Dembo's results are extended to more general product spaces in~\cite{Yan--nldp}.

Following the insight that this `low complexity' assumption enables good approximations to $Z$, Eldan showed that it also implies a relatively simple structure for the Gibbs measure $\mu$.  In~\cite{Eld--GWcomp} he considers the product space $\{-1,1\}^n$ (technically more convenient than $\{0,1\}^n$ for his proofs), assumes that ${\{\nabla_x f:\ x\in\{-1,1\}^n\}}$ is small in a slightly stronger sense given by the notion of `Gaussian width', and deduces that $\mu$ is a mixture of near-product measures with relatively small mutual information in the mixture.  His later papers~\cite{EldGro--exprndmgraphs,EldGro18} with Gross give a more precise description of \emph{which} products measures appear in the mixture, with even finer results in the setting of exponential random graph models.

So Eldan's structural conclusion is similar to Theorem A, and it is natural to wonder whether these `low complexity' assumptions on $f$ imply a bound on the DTC of the Gibbs measure $\mu$.  It turns out that they do, even under Chatterjee and Dembo's original and weaker covering-number assumption, and in a way that can easily be formulated over arbitrary product spaces.  This is proved in the preprint~\cite{Aus--low-cplx-Gibbs},

The resulting bound on DTC implies a structural conclusion similar to Eldan's.  But this implication is somewhat misleading.  In fact, the bound on DTC obtained in the main result of~\cite{Aus--low-cplx-Gibbs} is a by-product of another, more direct proof that $\mu$ is a mixture of near-products with low mutual information.  That more direct proof uses basic information theoretic principles such as Gibbs' variational principle and Marton's transportation-entropy inequality.  It applies over arbitrary product spaces once one has the right general definition of discrete gradients.  It gives better estimates on the structure of $\mu$ than one would obtain by taking the resulting bound on DTC and then applying Theorem A.  So~\cite{Aus--low-cplx-Gibbs} actually takes very little from the present paper: in addition to the non-negativity of DTC (Han's inequality), it just needs the alternative formula for DTC that we prove below in part (b) of Proposition~\ref{prop:TC-DTC-and-D}.  But it does treat a large class of probabilistic models that illustrate the structure appearing in Theorem A.
\\[4mm]

\noindent{\bf \large Part I: Some basic theory of multi-variate correlation}


\section{Background from probability}

\subsection{Basic conventions}

We assume standard results and notation from measure-theoretic probability.  In Part II we restrict attention to standard Borel probability spaces, in order to use disintegrations freely. We often denote a measurable space by a single letter such as $K$.  If it is standard Borel, then we denote its sigma-algebra by $\B_K$, and we write $\Pr(K)$ for the convex set of Borel probability measures on it.  If $K$ is a complete and separable metric space, then it is endowed with its Borel sigma-algebra by default.

If $X$ is a random variable taking values in a measurable space $K$, and $\P$ is a finite measurable partition of $K$, then we write $[X]_\P$ for the \textbf{quantization} of $X$ by $\P$: the $\P$-valued random variable defined by
\[[X]_\P = P \quad \Longleftrightarrow \quad X \in P.\]
Similarly, if $\mu$ is a probability measure on $X$, then $[\mu]_\P$ is the $\P$-indexed stochastic vector $(\mu(P):\ P\in\P)$.

Whenever several random variables are under discussion simultaneously, they are defined on the same underlying probability space.

\subsection{Kernels, mixtures and randomizations}

Let $(\O,\F)$ be a measurable space and $K$ a standard Borel space.  We assume familiarity with the notion of a kernel from $\O$ to $K$.  We usually denote such a kernel by $\o \mapsto \mu_\o$ or something similar.  Given such a kernel and also a probability measure $P$ on $\O$, the resulting \textbf{mixture} is the measure $\mu$ on $K$ defined by
\begin{equation}\label{eq:mix}
\mu(A) := \int_\O \mu_\o(A)\,P(\d \o) \quad \forall A \in \B_K.
\end{equation}

If $\mu_\bullet$ and $P$ are as above, then they also define a measure on the product space $(\O\times K,\F\otimes \B_K)$: the measure of $E \in \F\otimes \B_K$ is
\[\int_\O \mu_\o\{x:\ (\o,x) \in E\}\,P(\d \o).\]
The correctness of this definition is a standard extension of the proof of Fubini's theorem: see, for instance,~\cite[Theorem 10.2.1(II)]{Dud--book}.  We denote this new measure by $P \ltimes \mu_\bullet$.  It may also be described as the mixture obtained from the measure $P$ and the $(\O\times K)$-valued kernel $\o \mapsto \delta_\o\times \mu_\o$.  The marginal of $P\ltimes \mu_\bullet$ on the $K$-coordinate is precisely the mixture~\eqref{eq:mix}.

Since $K$ is standard Borel, any probability measure $\l$ on $(\O\times K,\F\otimes \B_K)$ can be written as $P\ltimes \mu_\bullet$ in an essentially unique way.  In this representation, $P$ is simply the marginal of $\l$ on $\O$, and then $\mu_\o$ is unique up to agreement for $P$-almost every $\o$.  These assertions are the existence and uniqueness parts of the measure disintegration theorem: see, for instance,~\cite[Theorem 10.2.2]{Dud--book}.

When $\mu \in \Pr(K)$ is represented by the mixture~\eqref{eq:mix}, a \textbf{randomization} of that mixture is any pair of random variables $(Y,X)$ on some background probability space whose joint distribution is $P \ltimes \mu_\bullet$ (so $Y$ takes values in $\O$ and $X$ takes values in $K$).  We invoke randomizations a few times below, because some information theoretic quantities are more easily described in terms of random variables, even though they depend only on the distributions of those random variables.

\section{Background from information theory}

\subsection{Finite-valued random variables}

Let $X$, $Y$ and $Z$ be finite-valued random variables on a common probability space.  We assume familiarity with the Shannon entropy $\rmH(X)$ and mutual information
\begin{equation}\label{eq:mut-inf}
\rmI(X\,;\,Y) = \rmH(X) + \rmH(Y) - \rmH(X,Y);
\end{equation}
with their conditional versions $\rmH(X\,|\,Z)$ and $\rmI(X\,;\,Y\,|\,Z)$; and with the \textbf{chain rules} that these quantities satisfy:
\[\rmH(X,Z) := \rmH(Z) + \rmH(X\,|\,Z) \quad \hbox{and} \quad \rmI(X\,;\,Y,Z) = \rmI(X\,;\,Z) + \rmI(X\,;\,Y\,|\,Z).\]
See, for instance,~\cite[Chapter 2]{CovTho06}.

\subsection{Extension to general random variables}\label{subs:basic-gen-RVs}

The definitions of $\rmH(X\,|\,Z)$ and $\rmI(X\,;\,Y\,|\,Z)$ are easily extended to the case of an arbitrary random variable $Z$, provided $X$ and $Y$ are still finite-valued.  For Shannon entropy, we have
\begin{equation}\label{eq:H-gen-dfn}
\rmH(X\,|\,Z) := \inf_\P \rmH(X\,|\,[Z]_\P),
\end{equation}
where the infimum runs over all finite quantizations of $Z$.  Then we define
\begin{equation}\label{eq:pre-cond-mut-inf}
\rmI(X\,;\,Y\,|\,Z) := \rmH(X\,|\,Z) - \rmH(X\,|\,Y,Z)
\end{equation}
as before.

Mutual information can be defined for a general pair of random variables $X$ and $Y$ by quantizing:
\begin{equation}\label{eq:I-gen-dfn}
\rmI(X\,;\,Y)  = \sup_{\P,\Q}\rmI([X]_\P\,;\,[Y]_\Q),
\end{equation}
where the supremum runs over all finite quantizations of $X$ and $Y$.  This mutual information may still be finite for non-discrete random variables: indeed, it is zero whenever $X$ and $Y$ are independent.

The analysis of mutual information for general random variables began with Kolmogorov~\cite{Kol56} and Dobrushin~\cite{Dob59}.  It is recounted carefully in Pinsker's classic book~\cite{Pin64}.  Some of their proofs require that the random variables take values in standard Borel spaces (see the translator's remarks to~\cite[Chapter 3]{Pin64}).  One can avoid this assumption by working entirely through quantizations, following Gelfand, Kolmogorov, Yaglom and Perez~\cite{GelKolYag56,Per59}.  We include a quick account here for reference.

The infimum in~\eqref{eq:H-gen-dfn} and supremum in~\eqref{eq:I-gen-dfn} are monotone under refinement of the partitions $\P$ and $\Q$: non-increasing in~\eqref{eq:H-gen-dfn} and non-decreasing in~\eqref{eq:I-gen-dfn}.  This fact is crucial to the understanding of these quantities.  It allows us to think of these infima and suprema as limits under refinement of partitions, and this point of view simplifies several proofs.  We meet similar situations later when we study TC and DTC.

It is also useful to know that these infima and suprema can be restricted to special families of partitions.

The following terminology helps us to formulate these properties precisely.

\begin{dfn}\label{dfn:adequate}
 For any measurable space $K$, a family $\frak{P}$ of finite measurable partitions of $K$ is
	\begin{itemize}
		\item \textbf{directed} if any two members of $\frak{P}$ have a common refinement in $\frak{P}$;
		\item \textbf{generating} if together the members of $\frak{P}$ generate the whole sigma-algebra of $K$.
	\end{itemize}
\end{dfn}

\begin{exs}\label{exs:adequate}
	\begin{enumerate}
		\item The family of all finite measurable partitions of $K$ is clearly directed and generating.
		\item Suppose that $K$ is a product $K_1\times K_2$ of two other measurable spaces with the product sigma-algebra, and that $\frak{P}_i$ is a family of finite partitions of $K_i$ for $i=1,2$.  Whenever $\P_i \in \frak{P}_i$ for $i=1,2$, we can identify $\P_1\times \P_2$ with the family
		\[\{A\times B:\ A \in \P_1,\ B \in \P_2\},\]
		which is a partition of $K$ into measurable rectangles.  If each $\frak{P}_i$ is directed (respectively, generating), then the collection $\{\P_1\times \P_2:\ \P_i \in \frak{P}_i\}$ is directed (respectively, generating) in $K$.
		
		This example generalizes directly to larger finite products.
		
		\item If the sigma-algebra of $K$ is countably generated, then it has a generating filtration
		\[\{\emptyset,K\}\subseteq \F_1\subseteq \F_2 \subseteq \dots\]
		consisting of finite algebras of subsets.  Let $\P_i$ be the partition of $K$ into the atoms of $\F_i$ for each $i$.  Then the collection $\frak{P} := \{\P_i:\ i\geq 1\}$ is directed and generating.
		\end{enumerate}
	\end{exs}

We can now formulate the notion of `limit' that we need.

\begin{dfn}\label{dfn:adequate-conv}
 Let $\frak{P}_1$, \dots, $\frak{P}_n$ be directed families of partitions of the measurable spaces $K_1$, \dots, $K_n$, respectively, and let $\phi$ be a function from $\frak{P}_1\times \cdots \times \frak{P}_n$ to the extended real line $[-\infty,\infty]$.  Let $c \in [-\infty,\infty]$. We write
	\[\lim_{\P_1\in \frak{P}_1,\P_2 \in \frak{P}_2,\dots ,\P_n \in \frak{P}_n}\phi(\P_1,\dots,\P_n) = c\]
to mean that, for every neighbourhood $U$ of $c$, there exist $\Q_1 \in \frak{P}_1$, \dots, $\Q_n \in \frak{P}_n$ such that
	\[\phi(\P_1,\dots,\P_n) \in U\]
	whenever $\P_i \in \frak{P}_i$ and $\P_i$ refines $\Q_i$ for each $i$.
	\end{dfn}

Since our families of partitions are directed under refinement, the function $\phi$ in Definition~\ref{dfn:adequate-conv} may be regarded as an extended-real-valued net indexed by a directed family.  Then Definition~\ref{dfn:adequate-conv} is simply an instance of convergence of a net in the sense of Moore and Smith: see, for instance,~\cite[pp28--30]{Dud--book}.  We assume some basic properties of limits of nets in the sequel; the properties we need are proved in the same way as for sequences.

In the sequel, we sometimes abbreviate $\lim_{\P \in \frak{P}}$ to $\lim_\P$ when the choice of directed family is clear from the context.

We may rewrite~\eqref{eq:H-gen-dfn} and~\eqref{eq:I-gen-dfn} as limits according to the following classical results of Dobrushin: see~\cite[Theorems 2.4.1 and 3.5.1]{Pin64}.

\begin{prop} \label{prop:Dob}
Suppose that $X$ and $Y$ take values in the measurable spaces $K$ and $L$ respectively, and let $\frak{P}$ and $\frak{Q}$ be any directed and generating families of partitions of $K$ and $L$.  In case $K$ is a finite set, we have
\begin{equation}\label{eq:H-flex}
	\rmH(X\,|\,Y) = \lim_{\Q \in \frak{Q}}\rmH(X\,|\,[Y]_\Q).
\end{equation}
For any choice of $K$ and $L$, we have
\begin{equation}\label{eq:mut-inf-flex}
\rmI(X\,;\,Y) = \lim_{\P \in \frak{P},\Q \in \frak{Q}}\rmI([X]_\P\,;\,[Y]_\Q).
\end{equation}
\end{prop}

To complete the generalization of basic information theory, we must define conditional mutual information in general.  This is done by combining~\eqref{eq:pre-cond-mut-inf} and~\eqref{eq:I-gen-dfn}: if $X$, $Y$ and $Z$ are general random variables, then we set
\begin{equation}\label{eq:gen-cond-mut-inf}
\rmI(X\,;\,Y\,|\,Z) := \sup_{\P,\Q}\rmI([X]_\P\,;\,[Y]_\Q\,|\,Z),
\end{equation}
where $\P$ and $\Q$ are as in~\eqref{eq:I-gen-dfn}.  For each fixed $\P$ and $\Q$ here, the right-hand side is defined by~\eqref{eq:pre-cond-mut-inf}, which in turn is a difference of quantities defined by infima as in~\eqref{eq:H-gen-dfn}.  In light of Proposition~\ref{prop:Dob}, we now recognize~\eqref{eq:gen-cond-mut-inf} as an iterated limit.  Suppose that $X$, $Y$ and $Z$ take values in $K$, $L$ and $M$ respectively, and let $\frak{P}$, $\frak{Q}$ and $\frak{R}$ be any directed and generating families for those respective spaces.  Then Proposition~\ref{prop:Dob} gives
\begin{equation}\label{eq:cond-mut-inf-flex}
	\rmI(X\,;\,Y\,|\,Z) = \lim_{\P \in \frak{P},\Q \in \frak{Q}}\lim_{\R \in \frak{R}}\rmI([X]_\P\,;\,[Y]_\Q\,|\,[Z]_\R).
	\end{equation}

In general, the order of the iterated limit in~\eqref{eq:cond-mut-inf-flex} is important, and one cannot write it simply as the joint limit over $\P$, $\Q$ and $\R$.

\begin{ex}\label{ex:bad-I}
	Let $K := L := (\bbZ/2\bbZ)^\bbN$ and let $M := K\times L$.  Let $X = (X_i)_{i\geq 1}$ and $Y = (Y_i)_{i\geq 1}$ take values in $K$ and $L$ respectively, and assume that all the $X_i$s and $Y_i$s are uniform and independent.  Finally, let $Z := (X,Y)$.
	
	Since $Z$ determines both $X$ and $Y$, the definition~\eqref{eq:gen-cond-mut-inf} gives that $\rmI(X\,;\,Y\,|\,Z) = 0$.  (Once we know $Z$, there is no information left for $X$ and $Y$ to share!)
	
	However, now let $\P_i$ and $\Q_i$ be the partitions of $K$ and $L$ generated by $(X_j)_{j=1}^{2i}$ and $(Y_j)_{j=1}^{2i}$, respectively, and let $\R_i$ be the partition of $M$ generated by the random variables $X_j$ for $1 \leq j \leq i$ and $X_j + Y_j (\!\!\!\mod 2)$ for $1 \leq j \leq 2i$.  Clearly the partitions $\P_i$ (respectively, $\Q_i$, $\R_i$) become finer as $i\to\infty$ and generate $\B_K$ (respectively, $\B_L$, $\B_M$). But for each fixed $i$ we have
	\begin{align*}
\rmI([X]_{\P_i}\,;\,[Y]_{\Q_i}\,|\,[Z]_{\R_i}) &= \sum_{j=1}^i \rmI(X_i\,;\,Y_i\,|\,X_i,X_i + Y_i) + \sum_{j=i+1}^{2i} \rmI(X_i\,;\,Y_i\,|\,X_i + Y_i)\\
&= 0 + i = i,
\end{align*}
by the independence among all the $X_i$s and $Y_i$s.  As $i\to\infty$ this tends to $\infty$, even though $\rmI(X\,;\,Y\,|\,Z) = 0$.
\fin
	\end{ex}

Mutual information still satisfies the chain rule for general random variables $X$, $Y$ and $Z$.  We give a quick proof below which illustrates the use of limits along families of partitions.  A proof using conditional distributions (and hence requiring standard Borel spaces) goes back to~\cite{Dob59}; see~\cite[equation $(3.6.3)$]{Pin64}, and also the translator's notes to~\cite[Chapter 3]{Pin64}.

\begin{lem}\label{lem:mut-inf-chain}
 Any random variables $X$, $Y$ and $Z$ satisfy
	\[\rmI(X\,;\,Y,Z) = \rmI(X\,;\,Z) + \rmI(X\,;\,Y\,|\,Z).\]
\end{lem}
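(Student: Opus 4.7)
My plan is to bootstrap from the chain rule for finite-valued random variables, passing to limits along directed generating families of partitions exactly in the spirit of the flexible formulas \eqref{eq:mut-inf-flex} and \eqref{eq:cond-mut-inf-flex} just introduced. Fix directed generating families $\frak{P}$, $\frak{Q}$, $\frak{R}$ of partitions of the target spaces of $X$, $Y$, $Z$; by Examples~\ref{exs:adequate}(2) the product family $\{\Q\times\R: \Q \in \frak{Q},\ \R \in \frak{R}\}$ is directed and generating for the target of $(Y,Z)$. For any such $\P$, $\Q$, $\R$ the three quantizations are finite-valued, so the classical chain rule applies:
\[\rmI([X]_\P\,;\,[Y]_\Q,[Z]_\R) = \rmI([X]_\P\,;\,[Z]_\R) + \rmI([X]_\P\,;\,[Y]_\Q\,|\,[Z]_\R).\]

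The heart of the proof is to take the $\R$-limit first, with $\P$ and $\Q$ held fixed. Because $[X]_\P$ is finite-valued, both $\rmI([X]_\P\,;\,[Y]_\Q,[Z]_\R)$ and $\rmI([X]_\P\,;\,[Z]_\R)$ are bounded above by $\rmH([X]_\P) < \infty$ and non-decreasing under refinement of $\R$, so their $\R$-limits exist and are finite. Applying \eqref{eq:mut-inf-flex} to $[X]_\P$ versus $(Y,Z)$, with the directed generating family on $\Q\times M$ obtained by fixing the discrete partition of the finite set $\Q$ and letting $\R$ vary, and to $[X]_\P$ versus $Z$, those limits are $\rmI([X]_\P\,;\,[Y]_\Q,Z)$ and $\rmI([X]_\P\,;\,Z)$ respectively. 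Subtracting two convergent finite nets gives convergence of their difference, and since $[X]_\P$ and $[Y]_\Q$ are finite-valued the outer supremum in \eqref{eq:cond-mut-inf-flex} collapses and identifies that limit with $\rmI([X]_\P\,;\,[Y]_\Q\,|\,Z)$. Hence
\[\rmI([X]_\P\,;\,[Y]_\Q,Z) = \rmI([X]_\P\,;\,Z) + \rmI([X]_\P\,;\,[Y]_\Q\,|\,Z).\]

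Finally, I take the joint limit as $(\P,\Q)$ refines through $\frak{P}\times\frak{Q}$. The middle term does not depend on $\Q$ and converges to $\rmI(X\,;\,Z)$ by \eqref{eq:mut-inf-flex}. The left side converges to $\rmI(X\,;\,Y,Z)$ by \eqref{eq:mut-inf-flex} applied with the product family $\frak{Q}\times\frak{R}$ on the $(Y,Z)$-target, using that iterated and joint suprema of a monotone net agree. The conditional term converges to $\rmI(X\,;\,Y\,|\,Z)$ by the outer limit in \eqref{eq:cond-mut-inf-flex}. Assembling these gives the claim.

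The main obstacle is the risk of $\infty - \infty$ cancellations if one is careless about the order of limits, together with the need to verify that product families inherit the "directed and generating" property so that \eqref{eq:mut-inf-flex} and \eqref{eq:cond-mut-inf-flex} apply when the second coordinate is $(Y,Z)$. Both concerns are dispatched by taking the $\R$-limit before the $(\P,\Q)$-limit: all the quantities subtracted then stay uniformly bounded by $\rmH([X]_\P) < \infty$, and Examples~\ref{exs:adequate}(2) handles the product families. Example~\ref{ex:bad-I} is a warning that the analogous move is delicate in general, but here the finiteness of $[X]_\P$ and $[Y]_\Q$ at the intermediate stage rules out any pathology of that kind.
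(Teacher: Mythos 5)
Your proposal is correct and follows essentially the same route as the paper: both start from the finite-valued chain rule and pass to the general case by iterating quantization limits, taking the $\R$-limit inside the $\P,\Q$-limits so that finiteness (everything being bounded by $\rmH([X]_\P)$) permits the necessary subtraction. The only difference is organizational --- you package the inner $\R$-limit as an intermediate identity for finite $[X]_\P,[Y]_\Q$ and general $Z$ before taking the outer $(\P,\Q)$-limit, whereas the paper writes all three quantities as the same iterated limit $\lim_\P\lim_\Q\lim_\R$ and then applies that limit directly to the finite chain rule.
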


\begin{Proof}
	This is a textbook result if $X$, $Y$ and $Z$ are all finite-valued, already quoted above.
	
	In the general case, let $X$, $Y$ and $Z$ take values in $K$, $L$ and $M$ respectively.  Let $\frak{P}$, $\frak{Q}$ and $\frak{R}$ be the families of all finite measurable partitions of those respective spaces (see Examples~\ref{exs:adequate}, Item 1).  Then two appeals to Proposition~\ref{prop:Dob} give
	\[\rmI(X\,;\,Z) = \lim_{\P,\R}\rmI([X]_\P\,;\,[Z]_\R)\]
	and
	\[\rmI(X\,;\,Y,Z) = \lim_{\P,\Q,\R}\rmI([X]_\P\,;\,[Y,Z]_{\Q \times \R}) = \lim_{\P,\Q,\R}\rmI([X]_\P\,;\,[Y]_\Q,[Z]_\R).\]
	The second of these formulae uses the construction from Examples~\ref{exs:adequate}, Item 2: in quantizing $(Y,Z)$, we may restrict attention to partitions of the form $\Q\times \R$, since these form a directed and generating family for $L\times M$.

Moreover, since mutual information is non-decreasing under refinement of quantizations, the joint limits above agree with any corresponding iterated limits. Therefore we also have
\[\rmI(X\,;\,Z) = \lim_\P\lim_\R\rmI([X]_\P\,;\,[Z]_\R) \quad \hbox{and} \quad \rmI(X\,;\,Y,Z) = \lim_\P\lim_\Q\lim_\R\rmI([X]_\P\,;\,[Y]_\Q,[Z]_\R).\]
	
	Finally, formula~\eqref{eq:cond-mut-inf-flex} gives
	\[\rmI(X\,;\,Y\,|\,Z) = \lim_{\P,\Q}\lim_\R\rmI([X]_\P\,;\,[Y]_\Q\,|\,[Z]_\R) = \lim_\P\lim_\Q\lim_\R\rmI([X]_\P\,;\,[Y]_\Q\,|\,[Z]_\R).\]
The order of limits is important in this equality, but not in the previous ones.  Once again, the second equality holds here because $\lim_\R\rmI([X]_\P\,;\,[Y]_\Q\,|\,[Z]_\R)$ is non-decreasing in $\P$ and $\Q$.

With these limit formulae in hand, we can apply the particular iterated limit $\lim_\P\lim_\Q\lim_\R$ to the finite-valued chain rule
	\[\rmI([X]_\P\,;\,[Y]_\Q,[Z]_\R) = \rmI([X]_\P\,;\,[Z]_\R) + \rmI([X]_\P\,;\,[Y]_\Q\,|\,[Z]_\R),\]
and we arrive at the chain rule in general.
\end{Proof}

\subsection{Kullback--Leibler divergence}

Let $\mu,\nu$ be two probability measures on the same measurable space $K$.  The Kullback--Leibler (`KL') divergence between $\nu$ and $\mu$ is defined to be $+\infty$ unless $\nu \ll\mu$, and in that case it is
\[\int\log\frac{\d\nu}{\d\mu}\,\d\nu\]
(which may still be $+\infty$).  This value is denoted by $\rmD(\nu\,\|\,\mu)$.  It is non-negative, and zero if and only if $\nu = \mu$.

KL divergence arises naturally in information theory and large deviations as a comparison between two distributions: see, for instance,~\cite[Chapters 2 and 11]{CovTho06} for the case of distributions on finite sets, and~\cite[Section 2.4]{Pin64} or~\cite[Appendix D.3]{DemZei--LDPbook} for the general case.

KL divergence is related to entropy and mutual information by various identities.  In particular, suppose that $X$ and $Y$ are arbitrary random variables, say taking values in $K$ and $L$, let $\l \in \Pr(K\times L)$ be the joint distribution of $X$ and $Y$, and let $\mu$ and $\nu$ be the marginals of $\l$.  Then
\begin{equation}\label{eq:I-and-D}
\rmI(X\,;\,Y) = \rmD(\l\,\|\,\mu\times \nu).
\end{equation}
A classical result of Gelfand, Kolmogorov, Yaglom and Perez equates this with the definition in~\eqref{eq:I-gen-dfn}: see~\cite[Theorem 2.4.2]{Pin64} and the translator's second note to that chapter of~\cite{Pin64}.  Equation~\eqref{eq:I-and-D} itself appears as~\cite[equation $(2.4.4)$]{Pin64}.

KL divergence plays a role in this paper through formulae for TC and DTC that generalize~\eqref{eq:I-and-D}: see Section~\ref{sec:ref-meas}.  The formula for TC obtained there is important during the proof of Theorem A.

Like entropy and mutual information, KL divergence satisfies a chain rule.  We need the following special case of this in the sequel.  Assume that $K$ and $L$ are standard Borel. Let $\l$ be a probability measure on $K\times L$, let $\mu$ and $\nu$ be its marginals on $K$ and $L$ respectively, and let $\nu_\bullet$ be a kernel from $K$ to $L$ such that $\l = \mu\ltimes \nu_\bullet$.  Also, let $\mu'$ and $\nu'$ be two other probability measures on $K$ and $L$ respectively.  Then
\begin{equation}\label{eq:KL-chain}
\rmD(\l\,\|\,\mu'\times \nu') = \rmD(\mu\,\|\,\mu') + \int \rmD(\nu_x\,\|\,\nu')\,\mu(\d x).
\end{equation}
A more general version of~\eqref{eq:KL-chain} decomposes $\rmD(\l\,\|\,\l')$ for any pair of probability measures $\l$, $\l'$ on $K\times L$, but we do not need this below.  For distributions on finite sets, the chain rule for KL divergence can be found in standard references such as~\cite[Theorem 2.5.3]{CovTho06}.  The proof of that special case already shows the essential calculation.  Dobrushin's paper~\cite{Dob59} seems to have been the first to treat the general case, which involves some extra analytic considerations.    That generality can also be found in~{\cite[equation~$(3.11.5)$]{Pin64}},  where it is attributed to Kolmogorov, and in~\cite[Theorem D.13]{DemZei--LDPbook}.

Later we make use of~\eqref{eq:KL-chain} through various special cases.  If we let $\mu' = \mu$ in~\eqref{eq:KL-chain}, then the first right-hand term vanishes, leaving
\begin{equation}\label{eq:KL-chain2}
	\rmD(\l\,\|\,\mu\times \nu') = \int \rmD(\nu_x\,\|\,\nu')\,\mu(\d x).
	\end{equation}
Now let us insert~\eqref{eq:KL-chain2} back into~\eqref{eq:KL-chain}, then do the same with the roles of the two coordinates reversed, and then apply~\eqref{eq:I-and-D}:
\begin{align}\label{eq:KL-chain3}
	\rmD(\l\,\|\,\mu'\times \nu') &= \rmD(\mu\,\|\,\mu') + \rmD(\l\,\|\,\mu\times \nu') \nonumber \\
	&= \rmD(\mu\,\|\,\mu') + \rmD(\nu\,\|\,\nu') +  \rmD(\l\,\|\,\mu\times \nu) \nonumber\\
	&= \rmD(\mu\,\|\,\mu') + \rmD(\nu\,\|\,\nu') +  \rmI(X\,;\,Y).
\end{align}
In case $\mu = \mu'$ and $\rmD(\nu\,\|\,\nu') < \infty$, we may re-arrange~\eqref{eq:KL-chain3} and make another substitution from~\eqref{eq:KL-chain2}:
\begin{equation}\label{eq:KL-chain4}
\rmI(X\,;\,Y) = \rmD(\l\,\|\,\mu\times \nu') - \rmD(\nu\,\|\,\nu') = \int \rmD(\nu_x\,\|\,\nu')\,\mu(\d x) - \rmD(\nu\,\|\,\nu').
\end{equation}
Finally, if we also let $\nu' = \nu$, then this simplifies to
\begin{equation}\label{eq:KL-chain5}
\rmI(X\,;\,Y) = \int \rmD(\nu_x\,\|\,\nu)\,\mu(\d x).
\end{equation}

\subsection{Mutual information and mixtures}\label{subs:mix-mut-inf}

If $P \ltimes \mu_\bullet$ is a probability measure on $\O\times K$, and if $Z,Y$ have joint distribution $P\ltimes \mu_\bullet$, then we define the \textbf{mutual information} in $P$ and $\mu_\bullet$ to be $\rmI(Z\,;\,Y)$.  Clearly this quantity depends only on the joint distribution $P \ltimes \mu_\bullet$.  We may also refer to the `mutual information in the mixture' $\int \mu_\bullet\,\d P$, and denote it by $\rmI(P,\mu_\bullet)$.  In the case of a finite mixture
\[\mu = p_1\mu_1 + \cdots + p_m\mu_m\]
this mutual information is bounded by $\rmH(p_1,\dots,p_m)$, and hence by $\log m$.
\medskip

\section{Total correlation and dual total correlation}

\subsection{Definitions}\label{subs:dfns}

Consider random variables $X_1$, \dots, $X_n$ defined on the same probability space and taking values in measurable spaces $K_1$, \dots, $K_n$.  Like mutual information, TC and DTC are defined by an explicit formula in case the range spaces $K_i$ are finite, and by a supremum over quantizations in the general case.

\begin{dfn}\label{dfn:TC}
 If $K_1$, \dots, $K_n$ are finite sets, then the \textbf{total correlation} (`\textbf{TC}') of $X_1$, \dots, $X_n$ is
\[\TC(X_1\,;\,\dots\,;\,X_n):= \Big(\sum_{i=1}^n\rmH(X_i)\Big) - \rmH(X_1,\dots,X_n).\]
In the general case, their total correlation is
\begin{equation}\label{eq:TC-gen-dfn}
\TC(X_1\,;\,\dots\,;\,X_n) := \sup_{\P_1,\dots,\P_n}\TC([X_1]_{\P_1}\,;\,\dots\,;\,[X_n]_{\P_n}),
\end{equation}
where this supremum runs over all tuples of finite measurable partitions $\P_i$ of the spaces $K_i$.  This supremum may equal $+\infty$.
\end{dfn}

For discrete random variables, total correlation was introduced by Watanabe in~\cite{Wat60}, and this is his terminology.  It goes by various other names, such as `multi-information' in~\cite{AbdPlu10,AyOlbBerJos06,SchnstiBerBia03}.

In the sequel we frequently write $[n] := \{1,2,\dots,n\}$ when $n\in\bbN$, and we generally abbreviate $[n]\setminus \{i\} =: [n]\setminus i$ when $i \in [n]$.  We set $[0] := \emptyset$ to allow for some degenerate cases. Given any tuple $x = (x_1,\dots,x_n)$ and nonempty $S\subseteq [n]$, we write $x_S := (x_i)_{i\in S}$.  We also write $X_S:= (X_i)_{i \in S}$ for the random variables, and we let $X_\emptyset$ be an arbitrary choice of deterministic (hence `trivial') random variable.  This is so that expressions such as  $\rmI(Y\,;\,X_\emptyset)$ still make sense: for example, this one simply equals zero.

\begin{dfn}\label{dfn:DTC}
 If $K_1$, \dots, $K_n$ are finite sets, then the \textbf{dual total correlation} (`\textbf{DTC}') of $X_1$, \dots, $X_n$ is
	\[\DTC(X_1\,;\,\dots\,;\,X_n):= \rmH(X_1,\dots,X_n) - \sum_{i=1}^n \rmH(X_i\,|\,X_{[n]\setminus i}).\]
In the general case, their dual total correlation is
\begin{equation}\label{eq:DTC-gen-dfn}
	\DTC(X_1,\dots,X_n) := \sup_{\P_1,\dots,\P_n}\DTC([X_1]_{\P_1}\,;\,\dots\,;\,[X_n]_{\P_n}),
\end{equation}
where this supremum runs over all tuples of finite measurable partitions $\P_i$ of the spaces $K_i$.  Once again, this may be $+\infty$ in general.
\end{dfn}

For discrete random variables, dual total correlation was first studied by Han in~\cite{Han75}.  It is sometimes called `excess entropy', as in~\cite{AyOlbBerJos06}, or `binding information', as in~\cite{AbdPlu10}.

The use of suprema to define TC and DTC in the general case is justified by the monotonicity properties proved in Subsection~\ref{subs:monotonicity} below.

It is clear that both TC and DTC depend only on the joint distribution $\mu$ of $X_1$, \dots, $X_n$, so we sometimes write $\TC(\mu)$ or $\DTC(\mu)$.  This leaves the relevant product structure to the reader's understanding.

\subsection{Basic identities}

The rest of this section is given to some basic identities and inequalities that relate TC, DTC, Shannon entropy, and mutual information.  They are used frequently during other proofs in later sections.

In the case of finite-valued random variables, these identities follow from easy applications of the chain rule.  However, the extension to the general case is tricky if we use the definitions as suprema in~\eqref{eq:TC-gen-dfn} and~\eqref{eq:DTC-gen-dfn}.  It is made much easier by the fact that these suprema can instead be understood as limits in the sense of Definition~\ref{dfn:adequate-conv}, which follows from the finite-valued case of the monotonicity properties in Lemmas~\ref{lem:TC-monotone} and~\ref{lem:DTC-monotone} below.  Those, in turn, are proved using the finite-valued cases of some of our other identities.  Therefore, although we formulate all of our results directly in their full generality, they are proved in two rounds: we first give all the proofs for finite-valued random variables, and then return to the general setting in Subsection~\ref{subs:gen-RVs}.

The next lemma provides some alternative formulae for TC and DTC.  These formulae do not require any limits or suprema, even for general random variables.

\begin{lem}\label{lem:alt-formulae}
 TC and DTC satisfy these identities:
	\begin{equation}\tag{\emph{a}}
\TC(X_1\,;\,\dots\,;\,X_n) = \sum_{i=1}^n\rmI(X_i\,;\,X_{[i-1]});
\end{equation}
\begin{equation}\tag{\emph{b}}
\DTC(X_1\,;\,\dots\,;\,X_n) = \sum_{i=1}^n\rmI(X_i\,;\,X_{[n]\setminus i}\,|\,X_{[i-1]}) = \sum_{i=1}^n\rmI(X_i\,;\,X_{\{i+1,\dots,n\}}\,|\,X_{[i-1]});
\end{equation}
and
\begin{equation}\tag{\emph{c}}
	\TC(X_1\,;\,\dots\,;\,X_n) + \DTC(X_1\,;\,\dots\,;\,X_n) = \sum_{i=1}^n \rmI(X_i\,;\,X_{[n]\setminus i}).
	\end{equation}
In particular, in each of these equations, one side equals $+\infty$ only if they both do.
	\end{lem}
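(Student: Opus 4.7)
The plan is to derive all three identities from the chain rule for Shannon entropy (and its corollary, the chain rule for mutual information), handling each part with a telescoping of conditional quantities.

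For part (a), I would start by applying the entropy chain rule to write $\rmH(X_1,\dots,X_n) = \sum_{i=1}^n \rmH(X_i\,|\,X_{[i-1]})$ (with the convention that $X_{[0]}$ is trivial, so $\rmH(X_1\,|\,X_{[0]}) = \rmH(X_1)$). Substituting into the definition of TC gives
\[\TC(X_1\,;\,\dots\,;\,X_n) = \sum_{i=1}^n \bigl(\rmH(X_i) - \rmH(X_i\,|\,X_{[i-1]})\bigr) = \sum_{i=1}^n \rmI(X_i\,;\,X_{[i-1]}),\]
which is (a).

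For part (b), I would first note that the two right-hand expressions agree term-by-term: since $X_{[n]\setminus i} = (X_{[i-1]}, X_{\{i+1,\dots,n\}})$ and conditioning on $X_{[i-1]}$ nullifies its mutual-information contribution, the chain rule for mutual information gives
\[\rmI(X_i\,;\,X_{[n]\setminus i}\,|\,X_{[i-1]}) = \rmI(X_i\,;\,X_{\{i+1,\dots,n\}}\,|\,X_{[i-1]}).\]
Then rewriting each summand as a difference of conditional entropies,
\[\rmI(X_i\,;\,X_{[n]\setminus i}\,|\,X_{[i-1]}) = \rmH(X_i\,|\,X_{[i-1]}) - \rmH(X_i\,|\,X_{[n]\setminus i}),\]
and summing over $i$, the first term telescopes via the entropy chain rule to $\rmH(X_1,\dots,X_n)$, which matches the definition of DTC.

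For part (c), I would simply add the formulas obtained in (a) and in the second form of (b) and group terms indexed by the same $i$:
\[\rmI(X_i\,;\,X_{[i-1]}) + \rmI(X_i\,;\,X_{\{i+1,\dots,n\}}\,|\,X_{[i-1]}) = \rmI(X_i\,;\,X_{[i-1]},X_{\{i+1,\dots,n\}}) = \rmI(X_i\,;\,X_{[n]\setminus i}),\]
by one more application of the chain rule for mutual information. Summing over $i$ yields (c). There is no real obstacle here beyond careful bookkeeping of the index sets $[i-1]$, $\{i+1,\dots,n\}$, and $[n]\setminus i$, together with the degenerate conventions for $i=1$ and $i=n$; the entire argument is routine once the chain rule is in hand.
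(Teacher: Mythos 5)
Your proof is correct and follows essentially the same chain-rule telescoping as the paper's own argument for parts (a) and (b). For part (c) you derive it by summing (a) and (b) and re-collapsing via the mutual-information chain rule, whereas the paper adds the raw definitions so that the $\rmH(X_1,\dots,X_n)$ terms cancel directly; this is a trivially equivalent bookkeeping choice.
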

\pagebreak

\begin{rmks} \makebox{} 
		\begin{enumerate}
		\item 	The right-hand sides in (a) and (b) depend on the order in which we label our random variables, as in the chain rule itself.  The symmetry of TC and DTC can be recovered by averaging over all possible orderings.  We use this trick later in the proof of Theorem A.
	\item Parts (a) and (c) appear as~\cite[equations~$(14)$ and~$(18)$]{TimAlfFleBeg14}.  In~\cite{AbdPlu10}, part (b) appears as equation (20) and part (c) appears inside the proof of Theorem 3.
	\end{enumerate}
\end{rmks}

\begin{Proof}[Finite-valued case.]
The middle and right-hand side in (b) are equal because, after conditioning on the variables $X_1$, \dots, $X_{i-1}$, they cannot make any contribution to the mutual information $\rmI(X_i\,;\,X_{[n]\setminus i}\,|\,X_{[i-1]})$.
	
	Using the chain rule to expand $\rmH(X_1,\dots,X_n)$ into $n$ terms, and inserting the result into Definitions~\ref{dfn:TC} and~\ref{dfn:DTC}, we have
	\[\TC(X_1\,;\,\dots\,;\,X_n) = \sum_{i=1}^n \big[\rmH(X_i) - \rmH(X_i\,|\,X_{[i-1]})\big]\]
	and
	\[\DTC(X_1,\dots,X_n) = \sum_{i=1}^n\big[\rmH(X_i\,|\,X_{[i-1]}) - \rmH(X_i\,|\,X_{[n]\setminus i})\big].\]
	Recalling formulae~\eqref{eq:mut-inf} and~\eqref{eq:pre-cond-mut-inf}, we arrive immediately at parts (a) and (b).
		
	On the other hand, if we add the definitions of TC and DTC for finite-valued random variables, then the term $\rmH(X_1,\dots,X_n)$ cancels to leave
	\[\TC(X_1\,;\,\dots\,;\,X_n) + \DTC(X_1\,;\,\dots\,;\,X_n) = \sum_{i=1}^n\big[\rmH(X_i) - \rmH(X_i\,|\,X_{[n]\setminus i})\big].\]
	Now another appeal to formula~\eqref{eq:mut-inf} gives part (c).	
	\end{Proof}

\bigskip

The next lemma gives recursive formulae for TC and DTC as the number of variables increases.

\begin{lem}\label{lem:recurse}
 If $n\geq 3$, then TC and DTC satisfy
	\begin{equation}\tag{\emph{a}}
		\TC(X_1\,;\,\dots\,;\,X_n) = \TC(X_1\,;\,\dots\,;\,X_{n-1}) + \rmI(X_n\,;\,X_{[n-1]})
	\end{equation}
	and
	\begin{equation}\tag{\emph{b}}
		\DTC(X_1\,;\,\dots\,;\,X_n) = \DTC(X_1\,;\,\dots\,;\,X_{n-1}) + \sum_{i=1}^{n-1} \rmI(X_i\,;\,X_n\,|\,X_{[n-1]\setminus i}).
	\end{equation}
	In particular, in both of these equations, one side equals $+\infty$ only if they both do.
\end{lem}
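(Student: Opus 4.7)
The plan is to prove both parts directly from Definitions~\ref{dfn:TC} and~\ref{dfn:DTC} by taking the formal difference with the $n$-variable and $(n-1)$-variable quantities and simplifying the resulting expression with the chain rule for Shannon entropy. Since everything is finite-valued here, every entropy in sight is finite and no care over infima/suprema is needed.

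For part (a), I would compute
\[
\TC(X_1;\dots;X_n) - \TC(X_1;\dots;X_{n-1}) = \rmH(X_n) - \bigl[\rmH(X_1,\dots,X_n) - \rmH(X_1,\dots,X_{n-1})\bigr].
\]
The bracketed term equals $\rmH(X_n\,|\,X_{[n-1]})$ by the chain rule for joint entropy, so what is left is $\rmH(X_n) - \rmH(X_n\,|\,X_{[n-1]}) = \rmI(X_n\,;\,X_{[n-1]})$ by formula~\eqref{eq:mut-inf}. This gives (a) immediately.

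For part (b), I would again form the difference of the defining expressions. The joint-entropy contribution again produces a single term $\rmH(X_n\,|\,X_{[n-1]})$ via the chain rule. The conditional-entropy contribution splits into the $i=n$ term of the $n$-variable sum (which is exactly $\rmH(X_n\,|\,X_{[n-1]})$ and so cancels the joint-entropy term) plus, for each $i \in [n-1]$, the difference
\[
\rmH(X_i\,|\,X_{[n-1]\setminus i}) - \rmH(X_i\,|\,X_{[n]\setminus i}) = \rmH(X_i\,|\,X_{[n-1]\setminus i}) - \rmH(X_i\,|\,X_{[n-1]\setminus i},X_n),
\]
which by formula~\eqref{eq:pre-cond-mut-inf} is exactly $\rmI(X_i\,;\,X_n\,|\,X_{[n-1]\setminus i})$. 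Summing over $i=1,\dots,n-1$ yields (b).

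No serious obstacle is anticipated: both identities are algebraic rearrangements of the definitions once one notices the key index-bookkeeping observation that, for $i<n$, the conditioning set $X_{[n]\setminus i}$ is obtained from $X_{[n-1]\setminus i}$ by appending $X_n$. The only place one might go astray is in forgetting to separate out the $i=n$ contribution on the DTC side; keeping that term explicit is what makes the joint-entropy and conditional-entropy pieces cancel cleanly.
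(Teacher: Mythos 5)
Your proof is correct and is essentially the paper's own argument: both derive (a) and (b) by substituting the chain rule $\rmH(X_{[n]}) = \rmH(X_{[n-1]}) + \rmH(X_n\,|\,X_{[n-1]})$ into the definitions and simplifying with~\eqref{eq:mut-inf} and~\eqref{eq:pre-cond-mut-inf}. Your explicit bookkeeping of the $i=n$ cancellation in part (b) is a helpful detail that the paper leaves implicit, but the approach is the same.
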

\pagebreak

\begin{rmk}
Part (b) appears as~\cite[equation $(13)$]{AyOlbBerJos06}.
\end{rmk}

\begin{Proof}[Finite-valued case.]
	To prove (a), let us substitute for $\TC(X_1\,;\,\dots\,;\,X_{n-1})$ from the right-hand side of Lemma~\ref{lem:alt-formulae}(a):
	\[ \TC(X_1\,;\,\dots\,;\,X_{n-1}) + \rmI(X_n\,;\,X_{[n-1]}) = \sum_{i=1}^{n-1}\rmI(X_i\,;\,X_{[i-1]}) + \rmI(X_n\,;\,X_{[n-1]}).\]
	This equals $\TC(X_1\,;\,\dots\,;\,X_n)$ by another use of Lemma~\ref{lem:alt-formulae}(a).
	
	Similarly, to prove (b), we substitute for $\DTC(X_1\,;\,\dots\,;\,X_{n-1})$ from the right-hand side of Lemma~\ref{lem:alt-formulae}(b):
	\begin{align*}
		&\DTC(X_1\,;\,\dots\,;\,X_{n-1}) + \sum_{i=1}^{n-1} \rmI(X_i\,;\,X_n\,|\,X_{[n-1]\setminus i}) \\&= \sum_{i=1}^{n-1}\rmI(X_i\,;\,X_{\{i+1,\dots,n-1\}}\,|\,X_{[i-1]}) + \sum_{i=1}^{n-1} \rmI(X_i\,;\,X_n\,|\,X_{[n-1]\setminus i})\\
		& = \sum_{i=1}^{n-1}\big(\rmI(X_i\,;\,X_{\{i+1,\dots,n-1\}}\,|\,X_{[i-1]}) +  \rmI(X_i\,;\,X_n\,|\,X_{[i-1]},X_{\{i+1,\dots,n-1\}})\big).
	\end{align*}
	By the chain rule for mutual information (Lemma~\ref{lem:mut-inf-chain}), this is equal to
	\[ \sum_{i=1}^{n-1}\rmI(X_i\,;\,X_{\{i+1,\dots,n\}}\,|\,X_{[i-1]}).\]
	Adding the dummy term $\rmI(X_n\,;\,X_\emptyset\,|\,X_{[n-1]}) = 0$, we see that this agrees with $\DTC(X_1\,;\newline \dots\,;\,X_n)$ by another use of Lemma~\ref{lem:alt-formulae}(b).
\end{Proof}

\begin{cor}\label{cor:recurse}
 If $n\geq 3$ then
	\[\TC(X_1\,;\,\dots\,;\,X_n) \geq \TC(X_1\,;\,\dots\,;\,X_{n-1})\]
	and
	\[\DTC(X_1\,;\,\dots\,;\,X_n) \geq \DTC(X_1\,;\,\dots\,;\,X_{n-1}).\]
\end{cor}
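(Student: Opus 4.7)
The plan is to read off both inequalities as immediate consequences of Lemma~\ref{lem:recurse}, using only the non-negativity of mutual information and conditional mutual information.

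For the TC inequality, I would invoke part (a) of Lemma~\ref{lem:recurse}, which writes
\[\TC(X_1\,;\,\dots\,;\,X_n) - \TC(X_1\,;\,\dots\,;\,X_{n-1}) = \rmI(X_n\,;\,X_{[n-1]}).\]
Since mutual information is non-negative for finite-valued random variables (a standard consequence of Jensen's inequality applied to $\rmD(\cdot\,\|\,\cdot)$, or directly of the definition via entropies), the right-hand side is $\geq 0$, giving the first claim.

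For the DTC inequality, I would invoke part (b) of Lemma~\ref{lem:recurse}, which gives
\[\DTC(X_1\,;\,\dots\,;\,X_n) - \DTC(X_1\,;\,\dots\,;\,X_{n-1}) = \sum_{i=1}^{n-1} \rmI(X_i\,;\,X_n\,|\,X_{[n-1]\setminus i}).\]
Each summand is a conditional mutual information among finite-valued random variables and is therefore non-negative, so the sum is $\geq 0$ and the second claim follows.

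There is really no obstacle here: the content of the corollary is fully packaged in Lemma~\ref{lem:recurse}, and the only additional fact needed is the non-negativity of (conditional) mutual information, which is available for finite-valued random variables without further comment. The proof is a one-liner in each case.
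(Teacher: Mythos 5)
Your proof is correct and is precisely the argument the paper intends: the corollary is stated with a bare \qed immediately after Lemma~\ref{lem:recurse} because the two recursion formulae there reduce the claim to the non-negativity of mutual information and conditional mutual information, exactly as you note.
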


\smallskip

\subsection{Monotonicity properties}\label{subs:monotonicity}

This subsection introduces simple but useful comparisons of the TC or DTC values of two different $n$-tuples of random variables.

\begin{lem}\label{lem:TC-monotone}
 For any random variables $X_1$, \dots, $X_n$ and $Y_1$, \dots, $Y_n$ we have
	\[\TC(Y_1\,;\,\dots\,;\,Y_n) \leq \TC(X_1\,;\,\dots\,;\,X_n) + \sum_{i=1}^n\rmH(Y_i\,|\,X_i).\]
	In particular, if $X_i$ almost surely determines $Y_i$ for each $i$, then
	\begin{equation}\label{eq:TC-monotone}
	\TC(Y_1\,;\,\dots\,;\,Y_n) \leq \TC(X_1\,;\,\dots\,;\,X_n).
	\end{equation}
\end{lem}

\begin{Proof}[Finite-valued case.]
	By symmetry and induction it suffices to prove the first inequality when $Y_i = X_i$ for $i=1,2,\dots,n-1$.  In that case two appeals to Lemma~\ref{lem:recurse}(a) give
\begin{align*}
&\TC(X_1\,;\,\dots\,;\,X_{n-1}\,;\,Y_n) - \TC(X_1\,;\,\dots\,;\,X_{n-1}\,;\,X_n) \\ &= \rmI(Y_n\,;\,X_{[n-1]}) - \rmI(X_n\,;\,X_{[n-1]})\\
&\leq \rmI(Y_n,X_n\,;\,X_{[n-1]}) - \rmI(X_n\,;\,X_{[n-1]})\\
&= \rmI(Y_n\,;\,X_{[n-1]}\,|\,X_n) \qquad \qquad \qquad (\hbox{chain rule for}\ \rmI)\\
&\leq \rmH(Y_n\,|\,X_n).
\end{align*}
This proves the first inequality.  The second inequality is a special case.
\end{Proof}

\begin{lem}\label{lem:DTC-monotone}
 For any random variables $X_1$, \dots, $X_n$ and $Y_1$, \dots, $Y_n$ we have
	\[\DTC(Y_1\,;\,\dots\,;\,Y_n) \leq \DTC(X_1\,;\,\dots\,;\,X_n) + (n-1)\sum_{i=1}^n\rmH(Y_i\,|\,X_i).\]
	In particular, if $X_i$ almost surely determines $Y_i$ for each $i$, then
	\begin{equation}\label{eq:DTC-monotone}
\DTC(Y_1\,;\,\dots\,;\,Y_n) \leq \DTC(X_1\,;\,\dots\,;\,X_n).
\end{equation}
\end{lem}

\begin{Proof}[Finite-valued case.]
	As for TC, it suffices to prove the first inequality when $Y_i = X_i$ for $i=1,2,\dots,n-1$.  In that case two appeals to Lemma~\ref{lem:recurse}(b) give
	\begin{align*}
		&\DTC(X_1\,;\,\dots\,;\,X_{n-1}\,;\,Y_n) - \DTC(X_1\,;\,\dots\,;\,X_{n-1}\,;\,X_n)\\
		&= \sum_{i=1}^{n-1}\big[\rmI(X_i\,;\,Y_n\,|\,X_{[n-1]\setminus i}) - \rmI(X_i\,;\,X_n\,|\,X_{[n-1]\setminus i})\big].
		\end{align*}
	Each of these $n-1$ terms may be bounded as follows:
	\begin{align*}
	&\rmI(X_i\,;\,Y_n\,|\,X_{[n-1]\setminus i}) - \rmI(X_i\,;\,X_n\,|\,X_{[n-1]\setminus i}) \\
	&\leq \rmI(X_i\,;\,Y_n,X_n\,|\,X_{[n-1]\setminus i}) - \rmI(X_i\,;\,X_n\,|\,X_{[n-1]\setminus i})\\
	&= \rmI(X_i\,;\,Y_n\,|\,X_{[n]\setminus i}) \qquad \qquad \qquad (\hbox{chain rule for}\ \rmI)\\
	&\leq \rmH(Y_n\,|\,X_{[n]\setminus i})\\
	&\leq \rmH(Y_n\,|\,X_n) \qquad \qquad \qquad \hbox{(since $n \in [n]\setminus i$)}.
	\end{align*}
Therefore our original difference of DTC-values is at most $(n-1)\rmH(Y_n\,|\,X_n)$, as required for the first inequality. The second inequality is a special case.
\end{Proof}

\bigskip

\begin{rmk}
		I would not be surprised if~\eqref{eq:TC-monotone} and~\eqref{eq:DTC-monotone} were already in the literature somewhere, but I have not found a reference.
	\end{rmk}

The factor of $(n-1)$ in the statement of Lemma~\ref{lem:DTC-monotone} cannot be improved in general.  This is seen in the following example, which is essentially taken from~\cite[Example 3.6]{Aus--WP}

\begin{ex}\label{ex:near-prod-big-DTC}
In the product group $(\bbZ/p\bbZ)^n$, consider the subset
	\[Z := \big\{(a_1,\dots,a_n) \in (\bbZ/p\bbZ)^n:\ a_1 + \cdots + a_n = 0\big\}\]
	Let $\mu$ be the uniform distribution on $Z$ and let $Y_1$, \dots, $Y_n$ be random variables with joint distribution $\mu$. Let $X_i := Y_i$ for $i\leq n-1$, and let $X_n$ be a uniform random element of $\bbZ/p\bbZ$ independent of all the other random variables.
Then $\DTC(X_1\,;\,\dots\,;\,X_n) = 0$, since all the $X_i$s are independent, while a simple calculation gives  $\DTC(Y_1\,;\,\dots\,;\,Y_n) = (n-1)\log p$. Finally,
\[\rmH(Y_i\,|\,X_i) = \left\{\begin{array}{ll}0 &\quad \hbox{if}\ i\leq n-1\\ \log p&\quad \hbox{if}\ i=n.\end{array}\right.\]
So the inequality of Lemma~\ref{lem:DTC-monotone} becomes an equality in this example. \fin
\end{ex}

It may be possible to improve Lemma~\ref{lem:DTC-monotone} by using some other information theoretic quantities in the bound, in addition to the conditional entropies $\rmH(Y_i\,|\,X_i)$, but we do not explore that possibility here.

\subsection{Extension to general random variables}\label{subs:gen-RVs}

For random variables $X_1$, \dots, $X_n$ taking values in general measurable spaces $K_1$, \dots, $K_n$, the definitions of TC and DTC in~\eqref{eq:TC-gen-dfn} and~\eqref{eq:DTC-gen-dfn} are tractable using the monoticity provided by Lemmas~\ref{lem:TC-monotone} and~\ref{lem:DTC-monotone}.  The next result is the analog of Proposition~\ref{prop:Dob}.

\begin{lem}\label{lem:part-flex}
 Let $\frak{P}_i$ be a directed and generating collection of partitions of $K_i$ for each $i$ (recall Definition~\ref{dfn:adequate}).  Then
	\begin{equation}\label{eq:part-flex}
	\TC(X_1,\dots,X_n) = \lim_{\P_1 \in \frak{P}_1,\dots,\P_n \in \frak{P}_n}\TC([X_1]_{\P_1}\,;\,\dots\,;\,[X_n]_{\P_n})
	\end{equation}
	and similarly with DTC in place of TC.
	\end{lem}

\begin{Proof}
The inequality `$\geq$' in~\eqref{eq:part-flex} is immediate from the definition~\eqref{eq:TC-gen-dfn}.  So now let $\Q_i$ be an arbitrary finite measurable partition of $K_i$ for each $i$, and let $\eps > 0$.  Since each $\frak{P}_i$ is directed and generating, there are members $\P_i \in \frak{P}_i$ such that
	\[\rmH([X_i]_{\Q_i}\,|\,[X_i]_{\P_i}) < \eps/n \quad \hbox{for each}\ i \in [n].\]
	Now Lemma~\ref{lem:TC-monotone} gives
	\[\TC([X_1]_{\P'_1}\,;\,\dots\,;\,[X_n]_{\P'_n}) > \TC([X_1]_{\Q_1}\,;\,\dots\,;\,[X_n]_{\Q_n}) - \eps\]
	whenever $\P'_i \in \frak{P}_i$ is a further refinement of $\P_i$ for each $i$. Since $\eps > 0$ was arbitrary, this completes the proof of~\eqref{eq:part-flex}. The argument for DTC is exactly analogous.
	\end{Proof}

\bigskip

TC and DTC give two different ways to quantify the dependence among a tuple of random variables.  In general they can give very different values.  However, they are not completely independent; in particular, one cannot be infinite unless they both are.  This is a corollary of the following more precise inequalities.

\begin{lem}\label{lem:basic-ineqs}
 The TC and DTC of $X_1$, \dots, $X_n$ both lie between
	\begin{equation}\label{eq:max-TC-DTC-max}
	\max_i \rmI(X_i\,;\,X_{[n]\setminus i}) \quad \hbox{and} \quad (n-1)\cdot \max_i\rmI(X_i\,;\,X_{[n]\setminus i}).
	\end{equation}
	In particular,
	\[\DTC \leq (n-1)\cdot \TC \quad \hbox{and} \quad \TC \leq (n-1)\cdot \DTC,\]
	and the quantities TC, DTC and $\max_i \rmI(X_i\,;\,X_{[n]\setminus i})$ are either all finite or all infinite.
\end{lem}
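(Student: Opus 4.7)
The plan is to read both bounds directly off the sum-of-mutual-informations formulas of Lemma~\ref{lem:alt-formulae2}. The one technical subtlety is the handling of possibly infinite values, but every summand in those formulas is a non-negative element of $[0,+\infty]$, so arithmetic is unambiguous; Lemma~\ref{lem:alt-formulae2} already certifies that both sides of each identity are simultaneously finite or simultaneously $+\infty$.

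For the upper bounds, I would apply Lemma~\ref{lem:alt-formulae2}(a),
\[\TC(X_1\,;\,\dots\,;\,X_n) = \sum_{i=1}^n \rmI(X_i\,;\,X_{[i-1]}),\]
note that the $i=1$ term vanishes, and for $i \geq 2$ use the chain rule of Lemma~\ref{lem:mut-inf-chain} in the form
\[\rmI(X_i\,;\,X_{[n]\setminus i}) = \rmI(X_i\,;\,X_{[i-1]}) + \rmI(X_i\,;\,X_{\{i+1,\dots,n\}}\,|\,X_{[i-1]}).\]
Since conditional mutual information is non-negative, each summand $\rmI(X_i\,;\,X_{[i-1]})$ is bounded by $\rmI(X_i\,;\,X_{[n]\setminus i})$, giving $\TC \leq (n-1)\max_i \rmI(X_i\,;\,X_{[n]\setminus i})$. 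The same method applied to Lemma~\ref{lem:alt-formulae2}(b), in the form $\DTC = \sum_{i=1}^n \rmI(X_i\,;\,X_{\{i+1,\dots,n\}}\,|\,X_{[i-1]})$, handles DTC: the $i=n$ summand is zero, and each of the remaining summands is at most $\rmI(X_i\,;\,X_{[n]\setminus i})$ by the same chain-rule identity.

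For the lower bounds, I would exploit the symmetry of TC and DTC in their arguments, even though the formulas in Lemma~\ref{lem:alt-formulae2} break that symmetry. For any fixed $j \in [n]$, reorder the indices so that $j$ is placed last in formula~(a) or first in formula~(b); the corresponding summand is then precisely $\rmI(X_j\,;\,X_{[n]\setminus j})$, and dropping the other non-negative summands yields both $\TC \geq \rmI(X_j\,;\,X_{[n]\setminus j})$ and $\DTC \geq \rmI(X_j\,;\,X_{[n]\setminus j})$. Maximizing over $j$ completes the sandwich~\eqref{eq:max-TC-DTC-max}, and the ``in particular'' consequences---$\DTC \leq (n-1)\TC$, $\TC \leq (n-1)\DTC$, and the equivalence of finiteness for the three quantities---are then immediate. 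I do not expect any real obstacle: the argument is just a careful bookkeeping of the terms in Lemma~\ref{lem:alt-formulae2} together with one application of the chain rule.
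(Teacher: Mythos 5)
Your proof is correct, and the lower bound argument coincides exactly with the paper's: reorder so that the chosen index is last in formula (a) (or first in (b)), extract that single summand, and drop the rest. For the upper bound you take a different route. The paper invokes Lemma~\ref{lem:alt-formulae}(c), i.e.\ $\TC+\DTC=\sum_i \rmI(X_i\,;\,X_{[n]\setminus i})$, bounds the right side by $n\cdot\max_i \rmI(X_i\,;\,X_{[n]\setminus i})$, and then subtracts the already-established lower bound on the other quantity to land on $(n-1)\cdot\max_i$. You instead bound each of the $n-1$ nonzero summands of formula (a) (resp.\ (b)) termwise, using the chain rule $\rmI(X_i\,;\,X_{[n]\setminus i}) = \rmI(X_i\,;\,X_{[i-1]}) + \rmI(X_i\,;\,X_{\{i+1,\dots,n\}}\,|\,X_{[i-1]})$ and dropping whichever summand is not the one appearing in the relevant formula. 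Your version avoids the subtraction step and therefore never risks an $\infty-\infty$ issue; the paper's version packages the same termwise chain-rule identity into identity (c) first and is marginally more compact. Both are sound; the underlying combinatorics is the same, worked at the level of individual terms in your case versus aggregated sums in the paper's.
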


\begin{Proof}
	\emph{Step 1.}\quad Suppose first that each $K_i$ is finite.  Then the finite-valued cases of parts (a) and (b) of Lemma~\ref{lem:alt-formulae} give
	\[\TC(X_1\,;\,\dots\,;\,X_n) \geq \rmI(X_n\,;\,X_{[n]\setminus n})\]
	and
	\[\DTC(X_1\,;\,\dots\,;\,X_n) \geq \rmI(X_1\,;\,X_{[n]\setminus 1})\]
	respectively, since all the terms in those sums are non-negative. This argument may be applied for any re-ordering of the random variables, so it implies the lower bound in~\eqref{eq:max-TC-DTC-max}.  Now we can insert that lower bound into the left-hand side of Lemma~\ref{lem:alt-formulae}(c) and then cancel a common term, giving the required upper bound.
	
	\vspace{7pt}
	
	\emph{Step 2.}\quad Now consider arbitrary measurable spaces $K_1$, \dots, $K_n$. For any finite measurable partitions $\P_1$, \dots, $\P_n$ of those spaces, Step 1 above gives
	\begin{multline}\label{eq:applying-step-1}
\max_i \rmI([X_i]_{\P_i}\,;\,[X_{[n]\setminus i}]_{\P_{[n]\setminus i}}) \\ \le \TC([X_1]_{\P_1}\,;\,\dots\,;\,[X_n]_{\P_n}),\ \  \DTC([X_1]_{\P_1}\,;\,\dots\,;\,[X_n]_{\P_n}) \\ \le (n-1)\cdot \max_i \rmI([X_i]_{\P_i}\,;\,[X_{[n]\setminus i}]_{\P_{[n]\setminus i}}),
\end{multline}
where $\P_{[n]\setminus i}$ denotes the partition of $\prod_{j \in [n]\setminus i}K_j$ into the sets $\prod_{j \in [n]\setminus i}C_j$ with $C_j \in \P_j$.  Taking the limit along $\P_1$, \dots, $\P_n$ in~\eqref{eq:applying-step-1} proves the bounds~\eqref{eq:max-TC-DTC-max} in general.

\vspace{7pt}

\emph{Step 3.}\quad The remaining assertions of Lemma~\ref{lem:basic-ineqs} follow directly from the bounds~\eqref{eq:max-TC-DTC-max}.\\
\makebox{}
\end{Proof}

\begin{dfn}\label{dfn:fin-corr}
 The random variables $X_1$, \dots, $X_n$ are \textbf{finitely correlated} if any (and hence all) of the quantities
	\[\TC(X_1\,;\,\dots\,;\,X_n), \quad \DTC(X_1\,;\,\dots\,;\,X_n), \quad \max_i\rmI(X_i\,;\,X_{[n]\setminus i})\]
	are finite.  We also refer to a joint distribution of such random variables as \textbf{finitely correlated}.  If this property does not hold, then the random variables and their joint distribution are \textbf{infinitely correlated}.
\end{dfn}

Clearly finite-valued random variables are always finitely correlated, and so are arbitrary independent random variables.

We can now extend our previous identities for TC and DTC to general random variables.  In the following three proofs, $\frak{P}_i$ is the family of all finite measurable partitions of $K_i$ for each $i$.

\begin{Proof}[Lemma~\ref{lem:alt-formulae} in general case.]\\[1mm]
	Parts (a) and (c) follow by applying the finite-valued case to the quantizations $[X_1]_{\P_1}$, \dots, $[X_n]_{\P_n}$ and then taking the limit along $\P_1 \in \frak{P}_1$, \dots, $\P_n \in \frak{P}_n$ on both sides.
	
	The proof of part (b) is complicated by the presence of conditional mutual information values on the right. Recall from Subsection~\ref{subs:basic-gen-RVs} that, if these are to be handled as limits, then they require iterated limits taken in the right order.  However, we can instead give an indirect proof of (b) using (a) and (c).
	
	If $X_1$, \dots, $X_n$ are infinitely correlated, then both sides of (b) are infinite.  If they are finitely correlated, then every quantity appearing in (a) or (c) is finite, and therefore so is every quantity appearing in (b) by the chain rule for mutual information (Lemma~\ref{lem:mut-inf-chain}).  As a result, we may subtract (c) from (c) to leave exactly (b), by another use of that chain rule.
	\end{Proof}

\begin{Proof}[Lemma~\ref{lem:recurse} and Corollary~\ref{cor:recurse} in general case.]\\[1mm]
	Now that we have the general case of Lemma~\ref{lem:alt-formulae}, these two results follow from it exactly as in the finite-valued case. All of the required re-arrangements are still correct if some of the values are equal to $+\infty$, because no cancellation or subtraction are involved.
\end{Proof}

\begin{Proof}[Lemmas~\ref{lem:TC-monotone} and~\ref{lem:DTC-monotone} in general case.]\\[1mm]
 Suppose that $Y_1$, \dots, $Y_n$ take values in $L_1$, \dots, $L_n$, and let $\frak{Q}_i$ be the family of all finite measurable partitions of $L_i$ for each $i$.  The finite-valued case of Lemma~\ref{lem:TC-monotone} gives
\[\TC([Y_1]_{\Q_1}\,;\,\dots\,;\,[Y_n]_{\Q_n}) \leq \TC([X_1]_{\P_1}\,;\,\dots\,;\,[X_n]_{\P_n}) + \sum_{i=1}^n\rmH([Y_i]_{\Q_i}\,|\,[X_i]_{\P_i})\]
for any $\P_i \in \frak{P}_i$ and $\Q_i \in \frak{Q}_i$.  Taking the limit along $\P_1 \in \frak{P}_1$, \dots, $\P_n \in \frak{P}_n$, and using the monotonicity of Shannon entropy, this gives
\begin{align*}
\TC([Y_1]_{\Q_1}\,;\,\dots\,;\,[Y_n]_{\Q_n}) &\leq \TC(X_1\,;\,\dots\,;\,X_n) + \sum_{i=1}^n\rmH([Y_i]_{\Q_i}\,|\,X_i)\\
&\le \TC(X_1\,;\,\dots\,;\,X_n) + \sum_{i=1}^n\rmH(Y_i\,|\,X_i).
\end{align*}
Now the limit along $\Q_1 \in \frak{Q}_1$, \dots, $\Q_n \in \frak{Q}_n$ completes the proof for TC.  The argument for DTC is analogous.
\end{Proof}

\section{TC and DTC in terms of reference measures}\label{sec:ref-meas}

Another approach to TC and DTC for general random variables starts with the formula~\eqref{eq:I-and-D} for mutual information in terms of KL divergence.  In the case of TC the relevant formula is already well-used in the literature:
\begin{equation}\label{eq:TC-and-D}
\TC(\mu) = \rmD(\mu\,\|\,\mu_1\times \cdots \times \mu_n),
\end{equation}
where $\mu_i$ is the $i^{\rm{th}}$ marginal of $\mu$.  
When $n=2$, this is just~\eqref{eq:I-and-D}.  The case $n\geq 3$ follows easily from that special case by induction, using Lemma~\ref{lem:recurse}(a) for the left-hand side and the general chain rule for KL divergence for the right-hand side.

In the form given by~\eqref{eq:TC-and-D}, TC has already played an important role in some results of theoretical probability, such as those of Csisz\'ar~\cite{Csi84} and Marton~\cite{Mar86,Mar96} mentioned in the Introduction.

If we assume that the measurable spaces $K_i$ are standard, then DTC also has a formula in terms of KL divergences.  Before we derive this, however, let us add another layer of generality.  Rather than compare $\mu$ to its own marginals, as in~\eqref{eq:TC-and-D}, one can compare it to a prior choice of reference measures on each $K_i$.

\begin{prop}\label{prop:TC-DTC-and-D}
 Assume that each $K_i$ is standard, and let $\l_i \in \Pr(K_i)$ be a reference measure for each $i$.
\begin{enumerate}
\item[(a)] If $\rmD(\mu_i\,\|\,\l_i) < \infty$ for each $i$, then $\TC(\mu)$ is equal to
\[\rmD(\mu\,\|\,\l_1\times \cdots \times \l_n) - \sum_{i=1}^n \rmD(\mu_i\,\|\,\l_i).\]
\item[(b)] If $\mu$ is finitely correlated and $\rmD(\mu_i\,\|\,\l_i) < \infty$ for each $i$, then $\DTC(\mu)$ is equal to
\begin{equation}\label{eq:DTC-and-D}
\sum_{i=1}^n \int \rmD(\mu_{i,z}\,\|\,\l_i)\,\mu_{[n]\setminus i}(\d z) - \rmD(\mu\,\|\,\l_1\times \cdots \times \l_n),
\end{equation}
where $\mu_{[n]\setminus i}$ is the projection of $\mu$ to $\prod_{j \in [n]\setminus i}K_j$, and $(\mu_{i,z}:\ z \in \prod_{j \in [n]\setminus i}K_j)$ is a conditional distribution for the $i^{\rm{th}}$ coordinate given the other coordinates according to $\mu$.
\end{enumerate}
\end{prop}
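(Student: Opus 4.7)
The plan is to deduce (a) from \eqref{eq:TC-and-D} together with a multi-variable chain rule for KL divergence, and then to obtain (b) from (a) via the identity $\DTC(\mu) + \TC(\mu) = \sum_i \rmI(X_i\,;\,X_{[n]\setminus i})$ of Lemma~\ref{lem:alt-formulae2}(c), combined with \eqref{eq:KL-chain4} applied to each mutual information on the right-hand side.

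For (a), the key step is the $n$-fold extension of \eqref{eq:KL-chain3},
\[\rmD(\mu\,\|\,\lambda_1\times\cdots\times\lambda_n) = \rmD(\mu\,\|\,\mu_1\times\cdots\times\mu_n) + \sum_{i=1}^n \rmD(\mu_i\,\|\,\lambda_i).\]
This is proved by induction on $n$: the base case $n=2$ is exactly \eqref{eq:KL-chain3} with $\mu':=\lambda_1$ and $\nu':=\lambda_2$ (recognizing $\rmI(X_1\,;\,X_2) = \rmD(\mu\,\|\,\mu_1\times\mu_2)$), and the inductive step applies \eqref{eq:KL-chain3} to the pair $X_{[n-1]}, X_n$ with reference measures $\lambda_1\times\cdots\times\lambda_{n-1}$ and $\lambda_n$, and then invokes the recursion $\TC(\mu) = \TC(\mu_{[n-1]}) + \rmI(X_n\,;\,X_{[n-1]})$ from Lemma~\ref{lem:recurse}(a) (valid for arbitrary random variables by Lemma~\ref{lem:other-extensions}). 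Substituting $\TC(\mu) = \rmD(\mu\,\|\,\mu_1\times\cdots\times\mu_n)$ from \eqref{eq:TC-and-D} then delivers (a).

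For (b), for each $i$ regard $\mu$ as a joint distribution on $\big(\prod_{j\ne i}K_j\big)\times K_i$ with marginals $\mu_{[n]\setminus i}$ and $\mu_i$ and conditional kernel $z\mapsto \mu_{i,z}$. The finite correlation of $\mu$ combined with Lemma~\ref{lem:basic-ineqs} forces $\rmI(X_i\,;\,X_{[n]\setminus i}) < \infty$, and together with the hypothesis $\rmD(\mu_i\,\|\,\lambda_i) < \infty$ this allows me to apply \eqref{eq:KL-chain4} with $\nu':=\lambda_i$ to obtain
\[\rmI(X_i\,;\,X_{[n]\setminus i}) = \int \rmD(\mu_{i,z}\,\|\,\lambda_i)\,\mu_{[n]\setminus i}(\d z) - \rmD(\mu_i\,\|\,\lambda_i).\]
Summing over $i$, inserting into the DTC identity above, and substituting the formula of part (a) for $\TC(\mu)$, the sums $\sum_i \rmD(\mu_i\,\|\,\lambda_i)$ cancel and \eqref{eq:DTC-and-D} falls out.

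The only real obstacle is bookkeeping for finiteness: both \eqref{eq:KL-chain3} and \eqref{eq:KL-chain4} become meaningless in the presence of $\infty - \infty$. Fortunately the two standing hypotheses, finite correlation of $\mu$ and $\rmD(\mu_i\,\|\,\lambda_i) < \infty$ for every $i$, are precisely what is needed to guarantee that every KL divergence and every mutual information appearing in the chain of manipulations is finite; in particular the integral $\int \rmD(\mu_{i,z}\,\|\,\lambda_i)\,\mu_{[n]\setminus i}(\d z)$ comes out finite for free from the rearranged form of \eqref{eq:KL-chain4}. Once finiteness is verified, the argument is a direct assembly of identities already proved.
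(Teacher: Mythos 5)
Your proposal is correct and takes essentially the same route as the paper: part (a) reduces to Csisz\'ar's identity~\eqref{eq:Csi} (which the paper cites directly while you re-derive it by induction from~\eqref{eq:KL-chain3}, equation~\eqref{eq:TC-and-D}, and Lemma~\ref{lem:recurse}(a) — a valid and self-contained alternative), and part (b) is obtained from the identity $\TC + \DTC = \sum_i \rmI(X_i\,;\,X_{[n]\setminus i})$ together with~\eqref{eq:KL-chain4}, exactly as in the paper, with the finiteness hypotheses used in the same places. (One small nit: the paper's text cites ``Lemma~\ref{lem:alt-formulae}(b)'' for the $\TC + \DTC$ identity, but the formula used is Lemma~\ref{lem:alt-formulae}(c); you cite the correct part.)
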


\begin{Proof}
\emph{Part (a).}\quad Given the formula~\eqref{eq:TC-and-D}, part (a) is equivalent to
\begin{equation}\label{eq:Csi}
	\rmD(\mu\,\|\,\l_1\times \cdots \times \l_n) = \rmD(\mu\,\|\,\mu_1\times \cdots \times \mu_n) + \sum_{i=1}^n \rmD(\mu_i\,\|\,\l_i).
	\end{equation}
This is a classical identity due to Csisz\'ar: see~\cite[equation $(2.11)$]{Csi84}.  It generalizes the earlier formula~\eqref{eq:KL-chain3} relating mutual information and KL divergences.  We may re-arrange Csisz\'ar's identity in the required way because we have assumed that each of the quantities $\rmD(\mu_i\,\|\,\l_i)$ is finite.

\vspace{7pt}

\emph{Part (b).}\quad This can be derived quickly by combining part (a) and Lemma~\ref{lem:alt-formulae}(c). Those ingredients give
\begin{equation}\label{eq:pre-DTC-and-D}
\DTC(\mu) = \sum_{i=1}^n \rmI(X_i\,;\,X_{[n]\setminus i}) - \Big[\rmD(\mu\,\|\,\l_1\times \cdots \times \l_n) - \sum_{i=1}^n \rmD(\mu_i\,\|\,\l_i)\Big],
\end{equation}
where $X_1$, \dots, $X_n$ are random variables with joint distribution $\mu$.  The bracketed expression on the right equals $\TC(\mu)$, and we may subtract it like this because we have assumed that $\mu$ is finitely correlated.

For each $i=1,2,\dots,n$, we now apply the special case~\eqref{eq:KL-chain4} of the chain rule for KL divergence:
\[\rmI(X_i\,;\,X_{[n]\setminus i}) = \int \rmD(\mu_{i,z}\,\|\,\l_i)\,\mu_{[n]\setminus i}(\d z) - \rmD(\mu_i\,\|\,\l_i).\]
Substituting this into the first sum in~\eqref{eq:pre-DTC-and-D}, the appearances of the quantities $\rmD(\mu_i\,\|\,\l_i)$ cancel, and we are left with~\eqref{eq:DTC-and-D}.
\end{Proof}

\begin{rmk}
In view of Csisz\'ar's identity~\eqref{eq:Csi}, the assumptions for Proposition~\ref{prop:TC-DTC-and-D}(b) are equivalent to the single assumption
\[\rmD(\mu\,\|\,\l_1\times \cdots \times \l_n) < \infty.\]
\end{rmk}

Since DTC is non-negative, Proposition~\ref{prop:TC-DTC-and-D}(b) implies that
\begin{equation}\label{eq:MadTet}
\rmD(\mu\,\|\,\l_1\times \cdots \times \l_n) \leq \sum_{i=1}^n \int \rmD(\mu_{i,z}\,\|\,\l_i)\,\mu_{[n]\setminus i}(\d z).
\end{equation}
This inequality is widely known.  It offers a simple explanation for the tensorization property of logarithmic Sobolev inequalities, as exposed in~\cite{Led9597}: see~\cite[Proposition 4.1]{Led9597} for an elementary family of inequalities that includes~\eqref{eq:MadTet}, attributed to Bobkov in that paper. As far as I know, it is a new observation that the gap in~\eqref{eq:MadTet} is actually the same quantity as DTC (which is the gap in the analogous Han inequality) after passing to a supremum over quantizations.

Before leaving this section, let us mention another setting that offers its own formulae for TC and DTC. If $X_1$, \dots, $X_n$ are real-valued and jointly absolutely continuous, then
\[\TC(\mu) = \sum_{i=1}^n \rmh(X_i) - \rmh(X_1,\dots,X_n)\]
and
\[\DTC(\mu) = \rmh(X_1,\dots,X_n) - \sum_{i=1}^n \rmh(X_i\,|\,X_{[n]\setminus i}).\]
Here $\rmh$ stands for Shannon's differential entropy (see~\cite[Chapter 8]{CovTho06}), and we assume enough smoothness of the joint PDF that the subtracted terms are all finite.  These identities generalize a standard formula for the mutual information between jointly continuous random variables: see~\cite[Section 8.5]{CovTho06}.  The proof of these formulae is analogous to the proof of Proposition~\ref{prop:TC-DTC-and-D}, and we omit it.  Alternatively, they may be subsumed into Proposition~\ref{prop:TC-DTC-and-D} itself if one expands the definition of KL divergence to allow arbitrary reference measures.

\section{Conditional TC and DTC}

\subsection{Definitions and first properties}

Consider a random $n$-tuple $X_1,\dots,X_n$ and another random variable $Y$ on the same probability space.  Assume that the $X_i$s are finite-valued, but let $Y$ be arbitrary. Then we define the \textbf{conditional total correlation} by
\begin{equation}\label{eq:cond-TC-dfn}
	\TC(X_1\,;\,\dots\,;\,X_n\,|\,Y) := \Big(\sum_{i=1}^n\rmH(X_i\,|\,Y)\Big) - \rmH(X_1,\dots,X_n\,|\,Y)
\end{equation}
and the \textbf{conditional dual total correlation} by
\begin{equation}\label{eq:cond-DTC-dfn}
	\DTC(X_1\,;\,\dots\,;\,X_n\,|\,Y) := \rmH(X_1,\dots,X_n\,|\,Y) - \sum_{i=1}^n\rmH(X_i\,|\,X_{[n]\setminus i},Y).
\end{equation}

In case the $X_i$s are also general random variables, we extend the definitions of conditional TC and DTC just as we did in the unconditional case:
\[\TC(X_1\,;\,\dots\,;\,X_n\,|\,Y) := \sup_{\P_1,\dots,\P_n}\TC\big([X_1]_{\P_1}\,;\,\dots\,;\,[X_n]_{\P_n}\,\big|\,Y\big)\]
and similarly with DTC in place of TC.  When $n=2$, this generalizes the definition of conditional mutual information in~\eqref{eq:gen-cond-mut-inf}.  Just as in Lemma~\ref{lem:part-flex} for unconditional TC and DTC, we may replace the supremum above with a limit along any directed and generating families of partitions $\frak{P}_i$. The proof is unchanged from the unconditional case.

\subsection{Expression in terms of disintegrations}

Let $X_1$, \dots, $X_n$ and $Y$ be random variables with respective target spaces $K_1$, \dots, $K_n$ and $L$, let $\mu$ be the joint distribution of $X_1$, \dots, $X_n$ on $\prod_i K_i$, and let $\nu$ be the distribution of $Y$ on $L$.  In case the spaces $K_i$ are standard Borel, $\mu$ has a disintegration $(\mu_y:\ y \in L)$ over $Y$.  Then conditional TC and DTC satisfy the obvious formula in terms of this disintegration, generalizing a classical identity for mutual information.

\begin{prop}\label{prop:TC-DTC-disint}
 In the situation above, we have
	\[\TC(X_1\,;\,\dots\,;\,X_n\,|\,Y) = \int \TC(\mu_y)\,\nu(\d y)\]
	and similarly with DTC in place of TC.
	\end{prop}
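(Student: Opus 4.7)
The plan is to prove both identities first in the case when the $X_i$s are finite-valued, then bootstrap to the general case by quantization in each $K_i$, using monotone convergence.

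In the finite-valued case the key auxiliary fact is that for any nonempty $S \subseteq [n]$,
\[\rmH(X_S \,|\, Y) = \int \rmH((\mu_y)_S)\,\nu(\d y),\]
where $(\mu_y)_S$ is the marginal of $\mu_y$ on $\prod_{i \in S} K_i$. This can be derived inside the paper's own framework: by~\eqref{eq:mut-inf}, $\rmH(X_S\,|\,Y) = \rmH(X_S) - \rmI(X_S\,;\,Y)$; by~\eqref{eq:KL-chain5}, $\rmI(X_S\,;\,Y) = \int \rmD((\mu_y)_S\,\|\,\mu_S)\,\nu(\d y)$; and a direct expansion of that KL-divergence integrand against the finite alphabet of $X_S$ yields $\int \rmD((\mu_y)_S\,\|\,\mu_S)\,\nu(\d y) = \rmH(\mu_S) - \int \rmH((\mu_y)_S)\,\nu(\d y)$. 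Substituting this back gives the identity. Once this is available, the definitions~\eqref{eq:cond-TC-dfn}  and~\eqref{eq:cond-DTC-dfn} — the latter combined with the chain-rule expression $\rmH(X_i\,|\,X_{[n]\setminus i},Y) = \rmH(X_{[n]}\,|\,Y) - \rmH(X_{[n]\setminus i}\,|\,Y)$ — turn each conditional entropy in the definition into an integral of an entropy of a marginal of $\mu_y$, and the finite-valued case of both equations follows by linearity of the integral.

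For the general case, each $K_i$ is standard Borel, hence has a countably generated sigma-algebra, so we can choose a nested sequence of finite partitions $\P_i^{(1)} \subseteq \P_i^{(2)} \subseteq \cdots$ that generates $\B_{K_i}$ (as in Examples~\ref{exs:adequate}, item 3). For each $k$, the finite-valued case applied to the quantizations $[X_i]_{\P_i^{(k)}}$ gives
\[\TC\big([X_1]_{\P_1^{(k)}}\,;\,\cdots\,;\,[X_n]_{\P_n^{(k)}}\,\big|\,Y\big) = \int \TC\big([\mu_y]_{\P_1^{(k)}\times\cdots\times\P_n^{(k)}}\big)\,\nu(\d y),\]
where I use the obvious identification of the joint quantization with the pushforward of $\mu_y$ through the product partition. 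As $k\to\infty$, the left-hand side converges to $\TC(X_1\,;\,\cdots\,;\,X_n\,|\,Y)$ by the conditional analog of Lemma~\ref{lem:part-flex}. For each fixed $y$, the integrand on the right converges monotonically up to $\TC(\mu_y)$ by the unconditional version of Lemma~\ref{lem:part-flex}, because $\{\P_1^{(k)}\times\cdots\times\P_n^{(k)}\}_k$ is a directed generating family on $\prod_i K_i$. Hence the monotone convergence theorem delivers the desired equality in the limit. The argument for DTC is identical, using that $\DTC([\mu_y]_{\P^{(k)}})$ is also monotone non-decreasing under refinements (via Lemma~\ref{lem:DTC-monotone}).

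The one point that requires care is measurability: for each $k$, the map $y \mapsto [\mu_y]_{\P_1^{(k)}\times\cdots\times\P_n^{(k)}}$ is measurable as a function into the simplex on the finite set $\P_1^{(k)}\times\cdots\times\P_n^{(k)}$ (its coordinates are $y\mapsto \mu_y(\prod_i A_i)$, which are measurable because $\mu_\bullet$ is a kernel), so $y\mapsto \TC([\mu_y]_{\P^{(k)}})$ and $y\mapsto \DTC([\mu_y]_{\P^{(k)}})$ are measurable, and their pointwise monotone limits $\TC(\mu_y)$ and $\DTC(\mu_y)$ are measurable too. I expect this — together with the pedantic check that both sides of each intermediate identity are simultaneously finite or $+\infty$, so the subtractions in the chain-rule manipulations are legitimate — to be the only real obstacle; everything else is bookkeeping.
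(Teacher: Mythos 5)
Your proposal is correct and follows essentially the same two-step architecture as the paper: prove the identity for finite-valued $X_i$, then bootstrap to the general case via a refining sequence of finite partitions, invoking Lemma~\ref{lem:part-flex} on both sides and the monotone convergence theorem to pass to the limit. The only point of divergence is small: in the finite-valued case, the paper simply invokes the classical formula $\rmH(X_i\,|\,Y) = \int \rmH(\mu_{i,y})\,\nu(\d y)$ as a known fact, whereas you re-derive it from $\rmI(X_S\,;\,Y) = \rmH(X_S) - \rmH(X_S\,|\,Y)$ together with the KL identity~\eqref{eq:KL-chain5} and a direct expansion of the divergence on the finite alphabet. That derivation is sound (the cross term vanishes by Fubini because $\int (\mu_y)_S\,\nu(\d y) = \mu_S$, and all the quantities are finite since $X_S$ is finite-valued), so it is a legitimate alternative justification of the same ingredient rather than a different route to the proposition. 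Your measurability remarks are a reasonable addition that the paper leaves implicit.
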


\begin{Proof}
	We give the proof for TC; the proof for DTC is analogous.
	
	Suppose first that each $K_i$ is finite.  In that case we have
	\[\TC(X_1\,;\,\dots\,;\,X_n\,|\,Y) = \sum_i \rmH(X_i\,|\,Y) - \rmH(X_1,\dots,X_n\,|\,Y).\]
	Let $(\mu_{i,y}:\ y\in L)$ be a conditional distribution for $X_i$ given $Y$.  Then $\mu_{i,y}$ is the $i^{\rm{th}}$ marginal of $\mu_y$ for almost every $y$, by the essential uniqueness of conditional distributions. Therefore, by the classical formula for conditional entropy in terms of conditional distributions, the above is equal to
	\[\int \Big[\sum_i\rmH(\mu_{i,y}) - \rmH(\mu_y)\Big]\,\nu(\d y) = \int \TC(\mu_y)\,\nu(\d y).\]
	
	For the general case, since each $K_i$ is standard Borel, it has a refining and generating sequence of finite partitions $\P_{i,1}$, $\P_{i,2}$, \dots.  For each finite $t$, the special case proved above gives
	\begin{equation}\label{eq:pre-TC-disint}
	\TC([X_1]_{\P_{1,t}}\,;\,\dots\,;\,[X_n]_{\P_{n,t}}\,|\,Y) = \int \TC([\mu_y]_{\P_{1,t}\times \cdots \times \P_{n,t}})\,\nu(\d y),
\end{equation}
where $[\mu_y]_{\P_{1,t}\times \cdots \times \P_{n,t}}$ is the discrete probability distribution induced on the product partition ${\P_{1,t}\times \cdots \times \P_{n,t}}$ by the measure $\mu_y$.  As $t\to\infty$:
	\begin{itemize}
		\item the left-hand side of~\eqref{eq:pre-TC-disint} converges to $\TC(X_1\,;\,\dots\,;\,X_n\,|\,Y)$, by the generalization of Lemma~\ref{lem:part-flex} to conditional random variables; and
		\item the right-hand side of~\eqref{eq:pre-TC-disint} converges to the integral of $\TC(\mu_y)$ with respect to $\nu(\d y)$, by Lemma~\ref{lem:part-flex} and the monotone convergence theorem.
		\end{itemize}
	\end{Proof}

\subsection{Clumping rules}\label{subs:clumping}

Let $X_1$, \dots, $X_n$ be random variables taking values in arbitrary measurable spaces $K_1$, \dots, $K_n$.  Let $[n] = S_1 \cup \cdots \cup S_m$ be a partition into non-empty subsets, and enumerate $S_j = {\{k_{j,1} < \dots < k_{j,s_j}\}}$ for each $j$.  Then the product spaces
\[\prod_{i=1}^n K_i \quad \hbox{and} \quad \prod_{j=1}^m\Big(\prod_{i\in S_j}K_i\Big)\]
have a canonical identification.  Corresponding to this, we may define the new $m$-tuple of random variable $Y_j:= X_{S_j}$ taking values in the product spaces $K_{S_j} := \prod_{i\in S_j}K_i$.  We refer to this construction as \textbf{clumping}, and to $Y_1$, \dots, $Y_m$ as \textbf{clumped} random variables.

In the statements of the next two lemmas, we need the following convention: for a single random variable $Z$ (that is, a `$1$-tuple') we always have
\[\TC(Z) = \DTC(Z) = 0.\]

\begin{lem}[TC clumping rules]\label{lem:TC-clumping}
 The random variables above satisfy
	\[\TC(X_1\,;\,,\dots\,;\,X_n) = \TC(Y_1\,;\,\dots\,;\,Y_m) + \sum_{j=1}^m \TC(X_{k_{j,1}}\,;\,\dots\,;\,X_{k_{j,s_j}}).\]
	In particular, one side is $+\infty$ only if they both are.
\end{lem}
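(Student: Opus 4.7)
The plan is to handle the finite-valued case by a direct calculation from the definition, and then bootstrap to the general case via quantization along directed and generating families, using Lemma~\ref{lem:part-flex}.

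For the finite-valued case, observe that under the canonical identification of $\prod_{i=1}^n K_i$ with $\prod_{j=1}^m \prod_{i \in S_j} K_i$, the tuples $(X_1,\dots,X_n)$ and $(Y_1,\dots,Y_m)$ have the same joint distribution. In particular, $\rmH(Y_1,\dots,Y_m) = \rmH(X_1,\dots,X_n)$. Expanding each TC appearing in the statement via Definition~\ref{dfn:TC}, we get $\TC(Y_1;\dots;Y_m) = \sum_j \rmH(X_{S_j}) - \rmH(X_1,\dots,X_n)$ and $\TC(X_{k_{j,1}};\dots;X_{k_{j,s_j}}) = \sum_{i \in S_j} \rmH(X_i) - \rmH(X_{S_j})$. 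Summing the latter over $j$ and adding to the former, the terms $\rmH(X_{S_j})$ cancel, leaving $\sum_{i=1}^n \rmH(X_i) - \rmH(X_1,\dots,X_n) = \TC(X_1;\dots;X_n)$.

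For general random variables, choose a directed and generating family $\frak{P}_i$ of finite measurable partitions of $K_i$ for each $i$ (such a family exists, e.g.\ the family of all finite measurable partitions). By Examples~\ref{exs:adequate}, Item 2, the collection of product partitions $\frak{P}_{S_j} := \{\prod_{i \in S_j} \P_i : \P_i \in \frak{P}_i\}$ is directed and generating on $\prod_{i \in S_j} K_i$. For any choice $\P_i \in \frak{P}_i$, apply the finite-valued case to the quantizations $[X_1]_{\P_1},\dots,[X_n]_{\P_n}$, noting that $[Y_j]_{\prod_{i \in S_j} \P_i}$ is simply the clumped tuple of those quantizations. This gives the desired identity at every finite stage.

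Now take the limit along $\P_1 \in \frak{P}_1, \dots, \P_n \in \frak{P}_n$. By Lemma~\ref{lem:part-flex}, the left-hand side tends to $\TC(X_1;\dots;X_n)$; the $j$-th summand on the right tends to $\TC(X_{k_{j,1}};\dots;X_{k_{j,s_j}})$; and the first right-hand term tends to $\TC(Y_1;\dots;Y_m)$, since as $\P_1,\dots,\P_n$ range over their directed generating families, the product partitions $\prod_{i \in S_j} \P_i$ range over the directed generating family $\frak{P}_{S_j}$ on $\prod_{i \in S_j} K_i$, which is a legitimate choice by Lemma~\ref{lem:part-flex}. The only mild subtlety is the $+\infty$ dichotomy: all quantities in play are non-negative and monotone non-decreasing in the partitions (by Lemma~\ref{lem:TC-monotone} and Lemma~\ref{lem:other-extensions}), so each net converges in $[0,+\infty]$, and the finite-stage equality transfers to the limit as an equality in $[0,+\infty]$. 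Finiteness of either side then forces finiteness of every non-negative term on the other, giving the parenthetical claim.
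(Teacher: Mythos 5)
Your proposal is correct and follows essentially the same route as the paper: a direct cancellation of entropies from Definition~\ref{dfn:TC} in the finite-valued case, followed by passage to general random variables by taking limits along directed generating families of partitions via Lemma~\ref{lem:part-flex} and Examples~\ref{exs:adequate}. You simply spell out the limit argument (including the $+\infty$ dichotomy) in more detail than the paper, which dispatches it in one sentence.
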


\begin{rmk}
	The case $m=2$ of this rule appears as~\cite[equation~(4.1)]{Per77}, and~\cite[equation (5)]{AyOlbBerJos06} follows by applying a special case of this rule repeatedly.
\end{rmk}

\begin{Proof}
		By induction, we may assume that all but one of the sets $S_j$ are singletons --- applying that case to $m$ separate `clumpings' gives the general formula.  Having done so, we may also assume that our partition of $[n]$ is
	\[[n] = \{1,2,\dots,\ell\} \cup \{\ell+1\}\cup \cdots \cup \{n\}\]
	for some $\ell \in \{1,\dots,n-1\}$, by symmetry among the random variables.  In this case the desired formula is
	\begin{equation}\label{eq:TC-clumping-special}
	\TC(X_1\,;\,\dots\,;\,X_n) = \TC(X_1\,;\,\dots\,;\,X_\ell) + \TC\big(Y\,;\,X_{\ell+1}\,;\,\dots\,;\,X_n\big),
	\end{equation}
	where $Y := (X_1,\dots,X_\ell)$. This formula can be deduced quickly from part (a) of Lemma~\ref{lem:alt-formulae}:
	\begin{multline*}
\TC(X_1\,;\,\dots\,;\,X_n) = \sum_{i=1}^n \rmI(X_i\,;\,X_{[i-1]}) = \sum_{i=1}^\ell\rmI(X_i\,;\,X_{[i-1]}) + \sum_{i=\ell + 1}^n   \rmI(X_i\,;\,X_{[i-1]})\\
= \sum_{i=1}^\ell\rmI(X_i\,;\,X_{[i-1]}) + \sum_{i=\ell + 1}^n   \rmI\big(X_i\,;\,Y,X_{\ell+1},\dots,X_{i-1}\big).
\end{multline*}
By another appeal to Lemma~\ref{lem:alt-formulae}(a), the two sums on the line above are equal to the two right-hand terms in~\eqref{eq:TC-clumping-special}, respectively.  For the second sum, this becomes apparent after we insert the dummy term $\rmI(Y\,;\,X_\emptyset) = 0$.
\end{Proof}
\smallskip

\begin{lem}[DTC clumping rules]\label{lem:DTC-clumping}
 The random variables above satisfy
	\[		\DTC(X_1\,;\,,\dots\,;\,X_n) = \DTC(Y_1\,;\,\dots\,;\,Y_m) + \sum_{j=1}^m \DTC(X_{k_{j,1}}\,;\,\dots\,;\,X_{k_{j,s_j}}\,|\,X_{[n]\setminus S_j}).\]
	In particular, one side is $+\infty$ only if they both are.
\end{lem}
\smallskip

\begin{rmk}
A special case of this rule may be applied repeatedly to recover~\cite[equation $(14)$]{AyOlbBerJos06}, where DTC is called `excess entropy'.
\end{rmk}

\begin{Proof}
	Similarly to the proof of Lemma~\ref{lem:TC-clumping}, we may reduce to the case in which all but one of the sets $S_j$ are singletons, and then, by symmetry, to the case of the partition
	\[[n] = \{1\} \cup \cdots \cup \{\ell-1\} \cup \{\ell,\ell+1,\dots,n\}\]
	for some $\ell \in \{1,\dots,n-1\}$.  Note that this time we put the non-singleton last. For this partition the desired formula is
	\begin{equation}\label{eq:DTC-clumping-special}
	\DTC(X_1\,;\,\dots\,;\,X_n) = \DTC\big(X_1\,;\,\dots\,;\,X_{\ell-1}\,;\,Y\big) + \DTC(X_\ell\,;\,\dots\,;\,X_n\,|\,X_{[\ell-1]}),
	\end{equation}
	where $Y := (X_\ell,\dots,X_n)$. Now we proceed using the expression given by Lemma~\ref{lem:alt-formulae}(b): 
	\pagebreak
	
	\begin{align*}
	&\DTC(X_1\,;\,\dots\,;\,X_n)\\
	&= \sum_{i=1}^n \rmI(X_i\,;\,X_{\{i+1,\dots,n\}}\,|\,X_{[i-1]})\\
	&= \sum_{i=1}^{\ell-1} \rmI(X_i\,;\,X_{\{i+1,\dots,\ell-1\}},Y\,|\,X_{[i-1]}) + \sum_{i=\ell}^n\rmI(X_i\,;\,X_{\{i+1,\dots,n\}}\,|\,X_{[\ell-1]},X_\ell,\dots,X_{i-1}).
	\end{align*}
	By two more appeals to Lemma~\ref{lem:alt-formulae}(b), the two sums on the previous line are equal to the two terms on the right-hand side of~\eqref{eq:DTC-clumping-special}, respectively.  For the first sum, this becomes apparent after we insert the dummy term $\rmI(Y\,;\,X_\emptyset\,|\,X_{[\ell-1]}) = 0$.
\end{Proof}

\bigskip

The following special case of Lemma~\ref{lem:DTC-clumping} is quite intuitive, and is needed by itself in the sequel.

\begin{cor}\label{cor:mini-clumping}
 Any random variables $X_1$, \dots, $X_n$ and $Y$ satisfy
	\[\DTC(X_1\,;\,\dots\,;\,X_n\,;\,Y) = \rmI(X_1,\dots,X_n\,;\,Y) + \DTC(X_1\,;\,\dots\,;\,X_n\,|\,Y).\]
\end{cor}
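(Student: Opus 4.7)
The plan is to deduce Corollary~\ref{cor:mini-clumping} directly from the DTC clumping rule (Lemma~\ref{lem:DTC-clumping}). Treat $X_1,\dots,X_n,Y$ as an $(n+1)$-tuple (identifying $Y$ with $X_{n+1}$) and apply Lemma~\ref{lem:DTC-clumping} to the partition $[n+1] = \{1,\dots,n\} \cup \{n+1\}$. The clumped random variables become $Y_1 := (X_1,\dots,X_n)$ and $Y_2 := Y$, and the lemma yields
\[
\DTC(X_1\,;\,\dots\,;\,X_n\,;\,Y) = \DTC(Y_1\,;\,Y_2) + \DTC(X_1\,;\,\dots\,;\,X_n\,|\,Y) + \DTC(Y\,|\,X_1,\dots,X_n).
\]

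Two small reductions then finish the argument. First, the last summand on the right is a conditional DTC of a single random variable, so it vanishes by the convention stated at the beginning of Subsection~\ref{subs:clumping} (equivalently, the definition~\eqref{eq:cond-DTC-dfn} with $n=1$ collapses to $\rmH(Y\,|\,X_{[n]}) - \rmH(Y\,|\,X_{[n]}) = 0$). Second, I need to identify the two-variable DTC with a mutual information: $\DTC(Y_1\,;\,Y_2) = \rmI(Y_1\,;\,Y_2)$. In the finite-valued case this is immediate from the chain rule, since
\[
\DTC(Z\,;\,W) = \rmH(Z,W) - \rmH(Z\,|\,W) - \rmH(W\,|\,Z) = \rmH(Z) - \rmH(Z\,|\,W) = \rmI(Z\,;\,W).
\]
The general case of this identity follows by taking the limit along directed generating families of partitions, combining the DTC limit formula in Lemma~\ref{lem:part-flex} with the analogous formula~\eqref{eq:mut-inf-flex} for mutual information.

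Combining these two observations with the clumping identity yields exactly the claimed formula. There is no substantive obstacle here: the entire content of the corollary is the $m=2$ instance of Lemma~\ref{lem:DTC-clumping}, repackaged after noting that one summand is trivially zero and the other collapses to a mutual information.
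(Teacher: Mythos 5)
Your proof is correct and is exactly the route the paper intends: Corollary~\ref{cor:mini-clumping} is explicitly presented as a special case of Lemma~\ref{lem:DTC-clumping}, and your application to the partition $[n+1]=\{1,\dots,n\}\cup\{n+1\}$, together with the observations that the conditional DTC of a single variable vanishes and that $\DTC$ of a pair equals mutual information, is precisely the unwritten computation the paper's ``\qed'' stands for.
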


\begin{cor}\label{cor:clumping}
 In the setting of Lemmas~\ref{lem:TC-clumping} and~\ref{lem:DTC-clumping}, we have
	\[\TC(Y_1\,;\,\dots\,;\,Y_m) \leq \TC(X_1\,;\,\dots\,;\,X_n),\]
	and similarly with DTC in place of TC. In particular, if $S, S^\rm{c}$ is a binary partition of $[n]$, then
	\[\rmI(X_S\,;\,X_{S^{\rm{c}}}) \leq \min\big\{\TC(X_1\,;\,\dots\,;\,X_n), \DTC(X_1\,;\,\dots\,;\,X_n)\big\}.\]
\end{cor}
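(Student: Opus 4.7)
Both inequalities follow directly from the clumping identities just proven, by dropping non-negative summands from the right-hand sides. For TC, each correction term $\TC(X_{k_{j,1}}\,;\,\dots\,;\,X_{k_{j,s_j}})$ is either zero (when $s_j = 1$, by the convention declared in Subsection~\ref{subs:clumping}) or non-negative by Lemma~\ref{lem:basic-ineqs}, which bounds TC below by a maximum of mutual informations. So Lemma~\ref{lem:TC-clumping} immediately yields $\TC(Y_1\,;\,\dots\,;\,Y_m) \leq \TC(X_1\,;\,\dots\,;\,X_n)$.

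For DTC I would argue similarly, using Lemma~\ref{lem:DTC-clumping}, but now the correction terms are conditional DTC values $\DTC(X_{k_{j,1}}\,;\,\dots\,;\,X_{k_{j,s_j}}\,|\,X_{[n]\setminus S_j})$ and one must check these are non-negative. In the finite-valued case this is immediate from Proposition~\ref{prop:TC-DTC-disint}, which writes the conditional DTC as an integral of unconditional DTCs of the disintegration measures; each integrand is non-negative by Lemma~\ref{lem:basic-ineqs}. The general case then follows because conditional DTC is defined as a supremum of such finite-valued conditional DTCs over quantizations of the $K_{k_{j,\ell}}$. Dropping these non-negative terms from Lemma~\ref{lem:DTC-clumping} gives the DTC monotonicity.

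The ``in particular'' statement is then read off by specializing to the binary partition $[n] = S \cup S^{\rm{c}}$, so that $m = 2$ and $(Y_1, Y_2) = (X_S, X_{S^{\rm{c}}})$. For any pair of random variables, TC, DTC and mutual information all coincide: in the finite-valued case this is direct from Definitions~\ref{dfn:TC} and~\ref{dfn:DTC} together with~\eqref{eq:mut-inf}, and in general it passes through quantization limits via~\eqref{eq:mut-inf-flex} and Lemma~\ref{lem:part-flex}. Both inequalities then specialize to the claimed bound on $\rmI(X_S\,;\,X_{S^{\rm{c}}})$. There is no genuine obstacle in the argument; the whole proof is bookkeeping layered on top of the clumping identities and the non-negativity established in Lemma~\ref{lem:basic-ineqs}.
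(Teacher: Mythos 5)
Your proof is correct and follows the same route as the paper: drop the non-negative correction terms from the clumping identities, then specialize to $m=2$ where TC, DTC and mutual information coincide. The only difference is that you spell out why the conditional DTC correction terms are non-negative (via Proposition~\ref{prop:TC-DTC-disint} and quantization), which the paper treats as immediate from the non-negativity of unconditional DTC.
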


The second inequality here generalizes the lower bound in~\eqref{eq:max-TC-DTC-max}.

\begin{Proof}
	The first pair of inequalities follows from Lemmas~\ref{lem:TC-clumping} and~\ref{lem:DTC-clumping} because TC and DTC are non-negative.  The second pair is a special case of the first, because both TC and DTC reduce to mutual information in the case of two random variables.
\end{Proof}

\bigskip

\noindent{\bf\large Part II: Structure of measures with low correlation}


\section{TC and product measures}\label{sec:TC-and-prod}

The following concentration inequality lies behind much of Part II of this paper.

\begin{prop}[Marton's transportation inequality for product measures]\label{prop:Marton}
 Let $(K_1,d_{K_1})$, \dots, $(K_n,d_{K_n})$ be complete and separable metric spaces of diameter at most $1$. If $\nu \in \Pr(\prod_i K_i)$ is a product measure, and $\mu \in \Pr(\prod_i K_i)$ is arbitrary, then
	\[\ol{d_n}(\mu,\nu) \leq \sqrt{\frac{1}{2n}\rmD(\mu\,\|\,\nu)}.\]
\end{prop}

In Marton's original proofs in~\cite{Mar86,Mar96}, each $K_i$ is equal to a fixed finite set and given its disrete metric, but those proofs generalize with only cosmetic changes to give the result stated above.

Combining Proposition~\ref{prop:Marton} with the identity~\eqref{eq:TC-and-D} gives the main structural result for measures with small TC.

\begin{cor}\label{cor:Marton}
 Any $\mu \in \Pr(\prod_i K_i)$ satisfies
	\[\ol{d_n}(\mu,\mu_1\times \cdots \times \mu_n) \leq \sqrt{\frac{1}{2n}\TC(\mu)},\]
	where $\mu_i$ is the $i^{\rm{th}}$ marginal of $\mu$.
\end{cor}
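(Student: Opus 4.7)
The proof is essentially a one-line substitution, so my plan is simply to combine the two ingredients that have just been assembled.

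First I would set $\nu := \mu_1 \times \cdots \times \mu_n$, which is by construction a product measure in $\Pr(\prod_i K_i)$, so that Proposition~\ref{prop:Marton} (Marton's transportation inequality) applies to the pair $(\mu,\nu)$ and yields
\[
\ol{d_n}(\mu,\mu_1\times \cdots \times \mu_n) \;\leq\; \sqrt{\frac{1}{2n}\,\rmD(\mu\,\|\,\mu_1\times \cdots \times \mu_n)}.
\]
Next I would invoke the identity~\eqref{eq:TC-and-D}, namely
\[
\rmD(\mu\,\|\,\mu_1\times \cdots \times \mu_n) \;=\; \TC(\mu),
\]
and substitute into the right-hand side of the previous display to obtain the claimed bound.

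One minor bookkeeping remark: if $\TC(\mu) = +\infty$, the inequality is vacuous; if $\TC(\mu) < \infty$, then in particular $\mu \ll \mu_1\times\cdots\times\mu_n$, so the KL-divergence version of Marton's inequality gives a finite right-hand side and no convergence issues arise. The only ``obstacle'' worth mentioning is a sanity check that Proposition~\ref{prop:Marton} is stated in a form that accommodates general (not necessarily finite) alphabets $K_i$ endowed with diameter-$1$ metrics and the normalized Hamming average $d_n$ of~\eqref{eq:Hamming}; since this is precisely the setting in which the proposition is quoted, no extra work is required.
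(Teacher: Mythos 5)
Your proof is correct and is exactly the paper's argument: apply Proposition~\ref{prop:Marton} with $\nu = \mu_1 \times \cdots \times \mu_n$ and substitute the identity~\eqref{eq:TC-and-D}. The bookkeeping remark about the infinite case is fine but unnecessary, as the inequality is trivially true when $\TC(\mu) = +\infty$.
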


For finite alphabets with their discrete metrics, the relation between $\ol{d_n}$ and $\TC$ in Corollary~\ref{cor:Marton} can be partially reversed using the next lemma, which gives a kind of `continuity' for TC in the transportation metric.  It strengthens~\cite[Lemma 4.4]{Aus--WP}, but here we give a slightly more efficient proof based on Lemma~\ref{lem:TC-monotone}.

\begin{lem}\label{lem:TC-robust}
 Let $\mu,\nu \in \Pr(\prod_i K_i)$, and suppose that each $K_i$ has cardinality at most $k$ and is given its discrete metric: $d_{K_i}(x,y) := 1_{\{x\ne y\}}$.  Let $\delta := \ol{d_n}(\mu,\nu)$. Then
	\[|\TC(\mu) - \TC(\nu)| \leq \big(\rmH(\delta,1-\delta) + \delta \log(k-1)\big)n.\]
\end{lem}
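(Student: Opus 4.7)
The strategy is to bound $|\TC(\mu) - \TC(\nu)|$ term-by-term in its definition, using an almost-optimal coupling of $\mu$ and $\nu$ together with Fano's inequality applied coordinate-by-coordinate.

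Fix $\eps > 0$ and choose a coupling $\l$ of $\mu$ and $\nu$ with $\int d_n\,\d\l \leq \delta + \eps$; denote the two projections by $\bar X = (X_1,\dots,X_n)$ and $\bar Y = (Y_1,\dots,Y_n)$, and set $\delta_i := \Pr(X_i \neq Y_i)$, so that $\bar\delta := \frac{1}{n}\sum_{i=1}^n \delta_i \leq \delta + \eps$. Fano's inequality applied at each coordinate, using the cardinality bound $|K_i| \leq k$, gives
\[\rmH(X_i \mid Y_i) \leq \phi(\delta_i) := \rmH(\delta_i, 1-\delta_i) + \delta_i \log(k-1),\]
and symmetrically for $\rmH(Y_i \mid X_i)$. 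Expanding $\rmH(X_i, Y_i)$ two different ways yields the identity $\rmH(X_i) - \rmH(Y_i) = \rmH(X_i \mid Y_i) - \rmH(Y_i \mid X_i)$, whence $|\rmH(X_i) - \rmH(Y_i)| \leq \phi(\delta_i)$.

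For the joint entropies, the chain rule together with the fact that conditioning can only decrease entropy gives
\[\rmH(\bar X \mid \bar Y) \leq \sum_{i=1}^n \rmH(X_i \mid \bar Y) \leq \sum_{i=1}^n \rmH(X_i \mid Y_i) \leq \sum_{i=1}^n \phi(\delta_i),\]
and the same for $\rmH(\bar Y \mid \bar X)$. The two-way expansion again produces $|\rmH(\bar X) - \rmH(\bar Y)| \leq \sum_i \phi(\delta_i)$. Inserting both estimates into $\TC(\mu) - \TC(\nu) = \sum_i [\rmH(X_i) - \rmH(Y_i)] - [\rmH(\bar X) - \rmH(\bar Y)]$ yields $|\TC(\mu) - \TC(\nu)| \leq 2\sum_i \phi(\delta_i)$.

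Finally, Jensen's inequality---using concavity of $p \mapsto \rmH(p, 1-p)$ and linearity of $p \mapsto p\log(k-1)$---gives $\frac{1}{n}\sum_i \phi(\delta_i) \leq \phi(\bar\delta)$, and the monotonicity of $\phi$ on $[0,(k-1)/k]$ lets us replace $\bar\delta$ by $\delta + \eps$ in the main range $\delta + \eps \leq (k-1)/k$; sending $\eps \to 0$ produces the claim. The complementary regime $\delta > (k-1)/k$ is dispatched using the trivial bound $|\TC(\mu) - \TC(\nu)| \leq n \log k$, which is dominated by $2n\phi(\delta)$ whenever $k \geq 3$ (this reduces to the elementary inequality $k \leq (k-1)^2$). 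The main conceptual content is the pair of identities relating $|\rmH(X) - \rmH(Y)|$ and $|\rmH(\bar X) - \rmH(\bar Y)|$ to $\phi(\delta_i)$; the remaining steps are routine, the only small nuisance being the case analysis at the end.
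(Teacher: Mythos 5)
Your proof follows essentially the same route as the paper's: an (approximately) optimal coupling, Fano's inequality coordinate-by-coordinate, subadditivity for the joint term, and then concavity of $\rmH(\cdot,1-\cdot)$ to pass from the $\delta_i$'s to their average. Your elaboration of the joint-entropy estimate via $\rmH(\bar X\,|\,\bar Y)\leq\sum_i\rmH(X_i\,|\,\bar Y)\leq\sum_i\rmH(X_i\,|\,Y_i)$ is a welcome clarification of a step the paper passes over as just ``Fano's inequality.''

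However, you introduce an avoidable complication by working with an $\eps$-approximate coupling rather than an optimal one. Since each $K_i$ is finite, the set of couplings of $\mu$ and $\nu$ is a compact polytope and the transportation cost is a continuous linear function of the coupling, so the infimum in~\eqref{eq:trans-met} is attained. With an optimal coupling one has $\bar\delta=\delta$ exactly, and the Jensen step $\frac1n\sum_i\phi(\delta_i)\leq\phi(\bar\delta)=\phi(\delta)$ finishes the proof with no monotonicity argument and no case split. Your detour forces you to compare $\phi(\bar\delta)$ with $\phi(\delta+\eps)$, and the monotonicity argument breaks down once $\delta>(k-1)/k$. Your fallback via the trivial bound $|\TC(\mu)-\TC(\nu)|\leq n\log k$ requires $(k-1)^2\geq k$, i.e.\ $k\geq 3$, leaving a genuine gap when $k=2$ and $\delta>1/2$. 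That regime does occur (e.g.\ $\mu=\delta_{0^n}$, $\nu=\delta_{1^n}$), and although the claimed bound is still true there, your argument does not establish it. One can patch it by a flip symmetry: for $k=2$, replace $\nu$ by the coordinate-flipped measure $\nu'$, which has $\TC(\nu')=\TC(\nu)$ and $\ol{d_n}(\mu,\nu')\leq 1-\delta<1/2$, and apply the already-proven small-$\delta$ case together with $\rmH(1-\delta,\delta)=\rmH(\delta,1-\delta)$. But the cleaner fix is simply to use an exact optimal coupling as the paper does, which removes the case analysis entirely.
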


\begin{Proof}
	Let $\l$ be a coupling of $\mu$ and $\nu$ that witnesses the distance $\ol{d_n}(\mu,\nu)$, and let $(X_1,Y_1)$, \dots, $(X_n,Y_n)$ be random pairs that take values in $K_1\times K_1$, \dots, $K_n\times K_n$ and have joint distribution $\l$.  Also, let
	\[\delta_i := \l\{(x,y):\ x_i\neq y_i\} \quad \hbox{for each}\ i=1,2,\dots,n.\]
	Then Lemma~\ref{lem:TC-monotone} and Fano's inequality give
\[\TC(\nu) \le \TC(\mu) + \sum_{i=1}^n \rmH(Y_i\,|\,X_i) \le \TC(\mu) + \sum_{i=1}^n\big(\rmH(\delta_i,1-\delta_i) + \delta_i \log(k-1)\big).\]
	Since we chose $\l$ to be an optimal coupling, $\delta$ is the average of the $\delta_i$s.  Therefore the concavity of the entropy function turns this into
	\[\TC(\nu) \leq \TC(\mu) + \big(\rmH(\delta,1-\delta) +\delta \log(k-1)\big)n.\]
	By symmetry in $\mu$ and $\nu$, this completes the proof.
\end{Proof}

\begin{cor}\label{cor:TC-robust}
 In the setting of Lemma~\ref{lem:TC-robust}, if $\nu$ is a product measure, then
\[\TC(\mu) \leq \big(\rmH(\delta,1-\delta) + \delta \log (k-1)\big)n.\]
	\end{cor}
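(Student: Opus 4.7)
The plan is to apply Lemma~\ref{lem:TC-robust} directly, exploiting the fact that any product measure has vanishing total correlation. Specifically, if $\nu = \nu_1 \times \cdots \times \nu_n$, then plugging this factorization into Definition~\ref{dfn:TC} (or, equivalently, observing that $\rmD(\nu\,\|\,\nu_1\times\cdots\times\nu_n) = 0$ in the formulation of~\eqref{eq:TC-and-D}) immediately yields $\TC(\nu) = 0$. So the corollary reduces to an instance of the lemma.

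Concretely, I would invoke Lemma~\ref{lem:TC-robust} with the given $\mu$ and the hypothesized product measure $\nu$, noting that the hypothesis $\ol{d_n}(\mu,\nu) = \delta$ and the cardinality bound $|K_i|\leq k$ are in force. The lemma then gives
\[
|\TC(\mu) - \TC(\nu)| \leq 2\bigl(\rmH(\delta,1-\delta) + \delta \log(k-1)\bigr)n.
\]
Substituting $\TC(\nu) = 0$ and dropping the absolute value signs (since $\TC(\mu)\geq 0$) delivers the stated inequality.

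There is essentially no obstacle: the corollary is a one-line specialization, and the nontrivial content already lives in Lemma~\ref{lem:TC-robust} (which in turn rests on Fano's inequality and the concavity of the binary entropy function). The only thing to double-check when writing it out is that the version of $\TC$ used for the general (not necessarily finite-valued) case, extended via the supremum over quantizations in~\eqref{eq:TC-gen-dfn}, also vanishes on product measures; this is immediate because any quantization of a product measure is again a product measure on the finite quotient, so each finite-valued TC appearing in the supremum is zero.
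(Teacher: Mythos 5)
Your proof is correct and matches the paper's intended argument: the corollary is an immediate specialization of Lemma~\ref{lem:TC-robust} using $\TC(\nu)=0$ for a product measure $\nu$. Your extra remark checking that the quantization-based definition of TC also vanishes on product measures is a sensible sanity check, though here $K_i$ is already finite so the basic definition applies directly.
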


Unlike the inequality in Corollary~\ref{cor:Marton}, the inequality in Corollary~\ref{cor:TC-robust} deteriorates for larger alphabets.  The next example shows that this feature of Corollary~\ref{cor:TC-robust} is essential.  For large or infinite alphabets, I do not know a refined description of exactly which near-product measures have small TC.

\begin{ex}\label{ex:near-prod-big-TC}
	Let $\delta > 0$ and $k \geq 2$, and let $K_i := \{0,1,\dots,k-1\}$ for each $i = 1,2,\dots,n$.  For each $j = 0,1,\dots,k-1$, let $\nu_j$ be the Dirac point mass on the $n$-tuple $(j,j,\dots,j)$.  Finally, let
	\[\mu := (1-\delta)\nu_0 + \frac{\delta}{k-1}\sum_{j=1}^{k-1}\nu_j.\]
	The product $\mu\times \nu_0$ is the only coupling of $\mu$ and $\nu_0$, and it gives
	\[\ol{d_n}(\mu,\nu_0) = \delta.\]
	However,
	\begin{align*}
\TC(\mu) &= n\cdot \rmH\Big(1-\delta,\frac{\delta}{k-1},\dots,\frac{\delta}{k-1}\Big) - \rmH\Big(1-\delta,\frac{\delta}{k-1},\dots,\frac{\delta}{k-1}\Big) \\
&\geq (n-1)\delta \log (k-1).
\end{align*}
If $k$ is large enough, then this TC can be a large multiple of $n$, even if $\delta$ is very small. \fin
	\end{ex}

In addition to giving Corollary~\ref{cor:Marton}, Proposition~\ref{prop:Marton} is ultimately responsible for the approximation between $\mu_y$ and a product measure $\xi_y$ in part (b) of Theorem A.

\section{DTC and mixtures of product measures}

\subsection{The DTC of a mixture of products}

\begin{prop}\label{prop:DTC-approx-concav}
 Let $\mu \in \Pr(\prod_i K_i)$, and let
	\begin{equation}\label{eq:another-mix}
	\mu = \int_L \mu_y\,\nu(\d y)
	\end{equation}
	be a representation of $\mu$ as a mixture.  Then
	\begin{equation}\label{eq:DTC-approx-concav}
	\DTC(\mu) \leq \int_L \DTC(\mu_y)\,\nu(\d y) + \rmI(\nu, \mu_\bullet).
	\end{equation}
	In particular, if every $\mu_y$ is a product measure, then $\DTC(\mu) \leq \rmI(\nu,\mu_\bullet)$.
	\end{prop}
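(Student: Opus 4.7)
The plan is to introduce a randomization of the mixture and then express $\DTC(\mu)$ in terms of information-theoretic quantities involving the mixing variable. Specifically, let $(Y,X)$ be a randomization of the mixture in~\eqref{eq:another-mix}, where $X=(X_1,\dots,X_n)$ takes values in $\prod_i K_i$, $Y$ is distributed according to $\nu$, and $X$ given $Y=y$ is distributed according to $\mu_y$. By the definition of $\rmI(\nu,\mu_\bullet)$ in Subsection~\ref{subs:mix-mut-inf}, we have $\rmI(\nu,\mu_\bullet) = \rmI(X_1,\dots,X_n\,;\,Y)$, and by construction the joint law of $(X_1,\dots,X_n)$ is exactly $\mu$.

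Next, I would apply Corollary~\ref{cor:mini-clumping} (which itself is a consequence of the DTC clumping rule) to the $(n+1)$-tuple $X_1,\dots,X_n,Y$ to obtain
\[
\DTC(X_1\,;\,\dots\,;\,X_n\,;\,Y) = \rmI(X_1,\dots,X_n\,;\,Y) + \DTC(X_1\,;\,\dots\,;\,X_n\,|\,Y).
\]
The second right-hand term is then handled by Proposition~\ref{prop:TC-DTC-disint}, which expresses conditional DTC as an integral over the disintegration: since the conditional law of $X$ given $Y=y$ is $\mu_y$,
\[
\DTC(X_1\,;\,\dots\,;\,X_n\,|\,Y) = \int_L \DTC(\mu_y)\,\nu(\d y).
\]

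The remaining ingredient is the monotonicity of DTC under adjoining an extra random variable, namely
\[
\DTC(X_1\,;\,\dots\,;\,X_n) \leq \DTC(X_1\,;\,\dots\,;\,X_n\,;\,Y),
\]
which is Corollary~\ref{cor:recurse} (extended to general random variables in Lemma~\ref{lem:other-extensions}). Chaining these three facts gives exactly the desired inequality~\eqref{eq:DTC-approx-concav}. The ``in particular'' clause is then immediate: if every $\mu_y$ is a product measure, then $\DTC(\mu_y)=0$ for $\nu$-almost every $y$, so the integral term vanishes.

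There is no genuine obstacle here; the work is organisational rather than substantive. The only point requiring a moment of care is that Corollary~\ref{cor:mini-clumping} and Proposition~\ref{prop:TC-DTC-disint} both apply in the generality of arbitrary (not necessarily finite-valued) random variables, which is why we set up Part I's theory so carefully; under the standing standard-Borel convention of Part~II, both hypotheses are automatic.
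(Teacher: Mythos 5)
Your proof is correct and takes essentially the same route as the paper: introduce a randomization $(Y,X)$, apply Corollary~\ref{cor:mini-clumping} to rewrite $\DTC(X_1;\dots;X_n;Y)$, identify the conditional DTC with the integral via Proposition~\ref{prop:TC-DTC-disint}, and conclude with the monotonicity $\DTC(X_1;\dots;X_n) \leq \DTC(X_1;\dots;X_n;Y)$. The paper cites Lemma~\ref{lem:recurse}(b) rather than Corollary~\ref{cor:recurse} for the last step, but these are the same fact.
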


\begin{Proof}
	Let $X_1$, \dots, $X_n$ be random variables with joint distribution $\mu$, and let $Y$ be another random variable so that the pair $Y$, $(X_1,\dots,X_n)$ is a randomization of the mixture~\eqref{eq:another-mix}.  In terms of these random variables, the right-hand side of~\eqref{eq:DTC-approx-concav} is
	\[\DTC(X_1\,;\,\dots\,;\,X_n\,|\,Y) + \rmI\big((X_1,\dots,X_n)\,;\,Y\big).\]
	This is equal to $\DTC(X_1\,;\,\dots\,;\,X_n\,;\,Y)$ by Corollary~\ref{cor:mini-clumping}, and this is greater than or equal to $\DTC(X_1\,;\,\dots\,;\,X_n)$ by Lemma~\ref{lem:recurse}(b).
	
	The final assertion follows because DTC is zero for any product measure.
	\end{Proof}

\bigskip

DTC is even more sensitive to small perturbations in $\ol{d_n}$ than TC: near-products can have large DTC even for small alphabets.  Because of this, a gap in our understanding remains between Proposition~\ref{prop:DTC-approx-concav} and Theorem A.

\begin{ex}\label{ex:near-prod-big-DTC2}
Let the measure $\mu \in \Pr((\bbZ/p\bbZ)^n)$ and random variables $X_i$ and $Y_i$ be as in Example~\ref{ex:near-prod-big-DTC}.  Let $\nu$ be the uniform distribution on $(\bbZ/p\bbZ)^n$, so this is the joint distribution of $X_1$, \dots, $X_n$. Then $\DTC(\mu) = (n-1)\log p$. But the construction of the $Y_i$s and $X_i$s gives a coupling of $\mu$ and $\nu$ under which (i) the first $n-1$ coordinates always agree and (ii) the last coordinates agree with probability $1/p > 0$, so $\ol{d_n}(\mu,\nu) < 1/n$. \fin
\end{ex}

\subsection{Proof of Theorem A}

Let $(X_1,\dots,X_n)$ be a random $n$-tuple with joint distribution $\mu$.

\begin{lem}\label{lem:DTC-ave}
 There is a subset $S \subseteq [n]$ with $|S| \leq \DTC(\mu)/\delta^2$ such that
\[	\TC(X_1\,;\,\dots\,;\,X_n\,|\,X_S) + \DTC(X_1\,;\,\dots\,;\,X_n\,|\,X_S) \leq \delta^2|[n]\setminus S|.\]
	\end{lem}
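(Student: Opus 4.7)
The plan is to construct $S$ greedily, using the chain rule for DTC (Lemma~\ref{lem:alt-formulae}(b)) to bound its size and the conditional analog of Lemma~\ref{lem:alt-formulae}(c) to translate the stopping condition into the required inequality.

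First I would reformulate the target. Since $\rmH(X_i\,|\,X_S) = 0$ for $i \in S$, the conditional TC and DTC of $(X_1,\dots,X_n)$ given $X_S$ coincide with those of $(X_i:\ i \notin S)$ given $X_S$. The conditional analog of Lemma~\ref{lem:alt-formulae}(c) --- which follows by the same cancellation of $\rmH(X_{[n]\setminus S}\,|\,X_S)$ between \eqref{eq:cond-TC-dfn} and \eqref{eq:cond-DTC-dfn}, extended to general random variables via the same quantization argument as in Lemma~\ref{lem:alt-formulae2} --- gives
\[\TC(X_1\,;\,\dots\,;\,X_n\,|\,X_S) + \DTC(X_1\,;\,\dots\,;\,X_n\,|\,X_S) = \sum_{i\in [n]\setminus S} \rmI(X_i\,;\,X_{[n]\setminus(S\cup\{i\})}\,|\,X_S).\]
Thus the desired bound amounts to: the average of these conditional mutual informations over $i \notin S$ is at most $\delta^2$.

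The construction is then greedy. Set $S_0 := \emptyset$. Given $S_k \subsetneq [n]$, if the average of $\rmI(X_i\,;\,X_{[n]\setminus(S_k\cup\{i\})}\,|\,X_{S_k})$ over $i \in [n]\setminus S_k$ exceeds $\delta^2$, pick any $i_{k+1} \notin S_k$ achieving at least this average and set $S_{k+1}:= S_k \cup \{i_{k+1}\}$. Otherwise stop and put $S := S_k$. At termination the stopping condition is exactly the required inequality, so it only remains to bound $|S|$.

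The size bound comes from applying the chain rule in Lemma~\ref{lem:alt-formulae}(b) along a completion of $i_1,\dots,i_{|S|}$ to a permutation $(i_1,\dots,i_n)$ of $[n]$, using the permutation invariance of DTC. This yields
\[\DTC(\mu) = \sum_{k=1}^n \rmI\bigl(X_{i_k}\,;\,X_{\{i_{k+1},\dots,i_n\}}\,\bigm|\,X_{\{i_1,\dots,i_{k-1}\}}\bigr),\]
whose first $|S|$ summands are precisely the greedy increments (each strictly greater than $\delta^2$) and whose remaining summands are non-negative. Hence $|S|\delta^2 < \DTC(\mu)$, so $|S| \leq \DTC(\mu)/\delta^2$. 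The main obstacle is just bookkeeping for the general random variable case: one must check that the conditional versions of Lemma~\ref{lem:alt-formulae}(b),(c) hold in full generality, but both extensions follow by the same pattern of quantizing the $X_i$ and passing to iterated limits as in Lemma~\ref{lem:alt-formulae2}, and the assumption $\DTC(\mu) < \infty$ (which one may reduce to, since otherwise the conclusion is trivial with $S = \emptyset$ being excluded and $S = \{1\}$ vacuous, or alternatively the whole theorem can be handled by writing $\mu$ as a mixture of point masses) legitimises the finite arithmetic throughout.
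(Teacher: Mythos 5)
Your proof is correct. It reaches the same conclusion as the paper but via a greedy construction rather than the paper's approach, which averages the chain-rule expansion of $\DTC$ over all permutations of $[n]$ and then applies Markov's inequality together with the tower property of conditional expectation to locate a good set $S$. Both arguments rest on the same two ingredients: the chain-rule decomposition $\DTC(\mu) = \sum_k \rmI\bigl(X_{i_k}\,;\,X_{\{i_{k+1},\dots,i_n\}}\,\bigm|\,X_{\{i_1,\dots,i_{k-1}\}}\bigr)$ and the conditional form of Lemma~\ref{lem:alt-formulae}(c), which identifies $\TC(\cdots\,|\,X_S)+\DTC(\cdots\,|\,X_S)$ with $\sum_{i\notin S}\rmI(X_i\,;\,X_{[n]\setminus(S\cup\{i\})}\,|\,X_S)$. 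Your greedy version makes the choice of $S$ constructive and avoids random permutations and the tower-property step, so it is arguably more elementary; the paper's averaging argument is more compact on the page. Your observations about the need for the conditional extensions of Lemma~\ref{lem:alt-formulae}(b),(c) to general random variables, and the reduction to the case $\DTC(\mu)<\infty$, are the same ones the paper relies on.
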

\smallskip

(In fact we need only the weaker conclusion ${\TC(X_1\,;\,\dots\,;\,X_n\,|\,X_S) \leq \delta^2|[n]\setminus S|}$ to prove Theorem A.)

\begin{Proof}
The formula from Lemma~\ref{lem:alt-formulae}(b) is available for any ordering of the index set $[n]$.  We may therefore consider its average over all possible orderings:
\begin{equation}\label{eq:DTC-ave}
\DTC(X_1\,;\,\dots\,;\,X_n) = \sum_{i=1}^n\bbE\big[\rmI\big(X_{\s(i)}\,;\,X_{[n]\setminus \s(i)}\,\big|\,X_{\{\s(1),\dots,\s(i-1)\}}\big)\big],
\end{equation}
where $\bbE = \frac{1}{n!}\sum_{\s}$ denotes expectation over a uniformly random permutation $\s$ of $[n]$.

	By Markov's inequality, fewer than $\DTC(\mu)/\delta^2$ of the summands in~\eqref{eq:DTC-ave} can exceed $\delta^2$, so some $i \leq \DTC(\mu)/\delta^2 + 1$ satisfies
\[\bbE\big[\rmI\big(X_{\s(i)}\,;\,X_{[n]\setminus \s(i)}\,\big|\,X_{\{\s(1),\dots,\s(i-1)\}}\big)\big] \leq \delta^2.\]
By the tower property of iterated conditional expectations, it follows that some subset $S \subseteq [n]$ of cardinality $i-1$ satisfies
\[\bbE\Big[\rmI\big(X_{\s(i)}\,;\,X_{[n]\setminus \s(i)}\,\big|\,X_{\{\s(1),\dots,\s(i-1)\}}\big)\,\Big|\,\{\s(1),\dots,\s(i-1)\} = S\Big] \leq \delta^2.\]
However, conditionally on $\{\s(1),\dots,\s(i-1)\} = S$, the random image $\s(i)$ is equally likely to be any element of $[n]\setminus S$.  Therefore the conditional expectation above is equal to
\[\frac{1}{|[n]\setminus S|}\sum_{j \in [n]\setminus S}\rmI(X_j\,;\,X_{[n]\setminus j}\,|\,X_S).\]
(We used a random permutation $\s$ in~\eqref{eq:DTC-ave} in order to arrive at this average over $j \in [n]\setminus S$.)  By Lemma~\ref{lem:alt-formulae}(c), this average is equal to
\[\frac{1}{|[n]\setminus S|}\big(\TC(X_1\,;\,\dots\,;\,X_n\,|\,X_S) + \DTC(X_1\,;\,\dots\,;\,X_n\,|\,X_S)\big).\]
\end{Proof}
\pagebreak

\begin{Proof}[Theorem A.]\\[1mm]
Let $S$ be given by Lemma~\ref{lem:DTC-ave}, so the hypotheses of Theorem A give $|S| \leq \DTC(\mu)/\delta^2 \leq \delta n$.  Let $\mu_S$ be the joint distribution of $X_S$, and similarly for any other subfamily of the random variables $X_i$.  Let $L := \prod_{i \in S}K_i$ and let ${(\nu_y:\ y \in L)}$ be a conditional distribution for $(X_1,\dots,X_n)$ given $X_S$.  Then $\mu$ is equal to the mixture
	\begin{equation}\label{eq:canon-mix}
	\mu = \int_L\nu_y\ \mu_S(\d y).
\end{equation}
At this stage we have no control on the mutual information of this mixture.  But we regain such control if we project to coordinates in $S^{\rm{c}} := [n]\setminus S$:
\begin{equation}\label{eq:proj-mix}
\mu_{S^{\rm{c}}} = \int_L (\nu_y)_{S^{\rm{c}}}\ \mu_S(\d y).
\end{equation}
Indeed, the mutual information in the projected mixture~\eqref{eq:proj-mix} is precisely the mutual information between $X_S$ and $X_{S^{\rm{c}}}$.  By the last inequality of Corollary~\ref{cor:clumping}, this is at most $\DTC(\mu)$.  In particular, it is finite, and so $(\nu_y)_{S^{\rm{c}}} \ll \mu_{S^{\rm{c}}}$ for $\mu_S$-almost every $y$ because of~\eqref{eq:KL-chain5}.

Let $d_{S^{\rm{c}}}$ be the normalized Hamming average of the metrics $d_{K_i}$ for $i \in S^{\rm{c}}$: that is, the analog of~\eqref{eq:Hamming} in which the sum extends only over $i \in S^{\rm{c}}$ and with normalizing constant $|S^{\rm{c}}|$.  Let $\ol{d_{S^{\rm{c}}}}$ be the transportation metric associated to $d_{S^{\rm{c}}}$, defined as in~\eqref{eq:trans-met} with $d_{S^{\rm{c}}}$ in place of $d_n$.

For each $y \in L$, let $\xi'_y$ be the product measure on $\prod_{i\in S^{\rm{c}}}K_i$ with the same marginals as $(\nu_y)_{S^{\rm{c}}}$.  These measures satisfy the following estimates:
\begin{align*}
	\int_L\ol{d_{S^{\rm{c}}}}\big((\nu_y)_{S^{\rm{c}}},\xi'_y)\ \mu_S(\d y) &\leq \int_L\sqrt{\frac{1}{2n}\TC\big((\nu_y)_{S^{\rm{c}}}\big)}\ \mu_S(\d y) \quad \hbox{(by Corollary~\ref{cor:Marton})}\\
	&\leq \sqrt{\frac{1}{2n}\int_L\TC\big((\nu_y)_{S^{\rm{c}}}\big)\ \mu_S(\d y)}  \quad \hbox{(by H\"{o}lder's ineq.)}\\
	&= \sqrt{\frac{1}{2n}\TC(X_1\,;\,\dots\,;\,X_n\,|\,X_S)}  \quad \hbox{(by Proposition~\ref{prop:TC-DTC-disint})}\\
	&\leq \delta/\sqrt{2} < \delta  \qquad \qquad \hbox{(by Lemma~\ref{lem:DTC-ave})}.
\end{align*}

To finish the construction: let $\xi_y$ be any lift of $\xi'_y$ to a product measure on $\prod_i K_i$, chosen measurably in $y$; let $\rho_y$ be the Radon--Nikodym derivative $\d(\nu_y)_{S^{\rm{c}}}/\d\mu_{S^{\rm{c}}}$; and define a new measure $\mu_y$ on $\prod_i K_i$ by
\[\mu_y(\d x) := \rho_y(x_{S^{\rm{c}}})\cdot \mu(\d x) \quad \hbox{for each}\ y.\]
These new measures satisfy $(\mu_y)_{S^{\rm{c}}} = (\nu_y)_{S^{\rm{c}}}$ for every $y$.

Observe that the average $\int \rho_y \,\mu_S(\d y)$ is equal to the Radon--Nikodym derivative of $\int (\nu_y)_{S^{\rm{c}}}\,\mu_S(\d y)$ with respect to $\mu_{S^{\rm{c}}}$, which is identically equal to $1$, by~\eqref{eq:proj-mix}.  This fact and Fubini's theorem give
\[\int_L\mu_y(A)\,\mu_S(\d y) = \int_A\Big(\int_L\rho_y(x_{S^{\rm{c}}})\,\mu_S(\d y)\Big)\  \mu(\d x) = \int_A 1\,\mu(d x) = \mu(A)\]
for all measurable sets $A \subseteq \prod_i K_i$, so we have expressed $\mu$ itself as a mixture:
\begin{equation}\label{eq:mu-mix}
\mu = \int_L \mu_y\,\mu_S(\d y).
\end{equation}

This new mixture has the same mutual information as the mixture in~\eqref{eq:proj-mix}, because by~\eqref{eq:KL-chain5} that mutual information may be written as
\[\int \rmD(\mu_y\,\|\,\mu)\,\mu_S(\d y),\]
and the integrand appearing here is equal to
\begin{multline*}
\rmD(\mu_y\,\|\,\mu) = \int \frac{\d\mu_y}{\d\mu}\log \frac{\d\mu_y}{\d\mu}\,\d\mu = \int \rho_y(x_{S^{\rm{c}}})\log \rho_y(x_{S^{\rm{c}}})\,\mu(\d x) \\ = \int \rho_y(x')\log \rho_y(x')\,\mu_{S^{\rm{c}}}(\d x') = \rmD((\nu_y)_{S^{\rm{c}}}\,\|\,\mu_{S^{\rm{c}}}).
\end{multline*}
So the mutual information in the mixture~\eqref{eq:mu-mix} is at most $\DTC(\mu)$.  On the other hand, since $|S| \leq \delta n$, we have
\begin{equation}\label{eq:A-end}
\int_L\ol{d_n}(\mu_y,\xi_y)\ \mu_S(\d y) \leq \frac{|S|}{n} + \frac{|S^{\rm{c}}|}{n}\int_L\ol{d_{S^{\rm{c}}}}\big((\mu_y)_{S^{\rm{c}}},\xi'_y\big)\ \mu_S(\d y) < 2\delta,
\end{equation}
where the first inequality holds by lifting couplings of $(\mu_y)_{S^{\rm{c}}}$ and $\xi'_y$ arbitrarily to couplings of $\mu_y$ and $\xi_y$ and integrating the inequality
\[d_n(x,x') \le \frac{|S|}{n} + \frac{|S^{\rm{c}}|}{n}d_{S^{\rm{c}}}(x_{S^{\rm{c}}},x'_{S^{\rm{c}}}) \quad \Big(x,x' \in \prod_i K_i\Big).\]
\end{Proof}

\begin{rmk}
The proof of Theorem A obtains the required mixture of near-products by conditioning on a small, carefully-chosen set $S$ of the coordinates.  In this respect, it is a version of the `wringing method' introduced by Dueck~\cite{Due81} for a proof of the strong converse to the coding theorem for the multiple access channel.  See also~\cite[Section 4]{Ahl82} and~\cite[Section III]{Ahl85} for further development of `wringing' in the context of the multiple access channel.  A very similar method is introduced independently in the recent works~\cite{C-OKrzPerZde18,C-OPer19} to decompose certain Gibbs measures arising from factor-graph models into mixtures of approximate `pure states'.  In those works, as in ours, a large collection of random variables with modest dependence is brought close to independence by conditioning on a few of their number.  However, all of those papers obtain only decorrelations among bounded-size subfamilies of the random variables (usually just pairs).  By contrast, we obtain an approximation in $\ol{d_n}$, which is qualitatively stronger because it provides a coupling under which the joint behaviour of all coordinates is controlled at once.  As a result the specific shape of our estimates is different from those predecessors.
	\end{rmk}

\subsection{Bounding number of terms instead of mutual information}\label{subs:number-of-terms}

Theorem A provides a mixture of near-product measures with a bound on the mutual information in the mixture.  We could be slightly more demanding and ask for a bound on the number of terms.

Here we present two results of this kind.  The first is obtained by combining Theorem A with the following lemma about `sampling' from a mixture.

\begin{lem}[Sampling from a mixture with bounded mutual information]\label{lem:sampling}
 Let $(K,\mu)$ be a probability space, and let $\mu$ be represented as a mixture $\int_L \mu_\bullet\,\d \nu$. Let $\eps \in (0,1/2)$, let $L' \subseteq L$ be measurable with $\nu(L') > 1 - \eps/2$, and assume that the mutual information $I:= \rmI(\nu,\mu_\bullet)$ is finite.  Finally, let $m:= \lceil 16\eps^{-2}\rme^{16(I+1)/\eps}\rceil$. Then there exist $y_1$, \dots, $y_m \in L'$ such that
	\begin{equation}\label{eq:TV-close}
	\Big\|\frac{1}{m}\sum_{j=1}^m\mu_{y_j} - \mu\Big\| < 3\eps,
	\end{equation}
where $\|\cdot\|$ denotes the total variation norm.
\end{lem}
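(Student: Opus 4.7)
The plan is to combine a density-truncation step, an $L^1$-type concentration bound for the truncated empirical average, and a probabilistic-method selection of $y_1,\dots,y_m$. The crucial quantitative input is the identity $\rmI(\nu,\mu_\bullet) = \int \rmD(\mu_y\,\|\,\mu)\,\nu(\d y) = I$ (equation~\eqref{eq:KL-chain5}); in particular, $\mu_y \ll \mu$ for $\nu$-a.e.\ $y$, and $f_y := \d\mu_y/\d\mu$ is well-defined.

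First I would reduce to sampling against the restricted prior. Setting $\tilde\nu := \nu|_{L'}/\nu(L')$ and $\tilde\mu := \int \mu_y\,\tilde\nu(\d y)$, the signed-measure decomposition $\mu = \nu(L')\tilde\mu + \int_{L\setminus L'}\mu_y\,\nu(\d y)$ and $\nu(L') > 1 - \eps/2 > 1/2$ yield $\|\mu - \tilde\mu\| < \eps$, so it is enough to find $y_1,\dots,y_m \in L'$ with $\|\frac1m\sum_j \mu_{y_j} - \tilde\mu\| < 2\eps$. For a threshold $T>0$ to be chosen, set $g_y := \min(f_y,T)$ and $\hat\mu_y := g_y\cdot\mu$. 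The standard entropy tail bound (from $f\log f \geq f\log T$ on $\{f_y > T\}$ and $t\log t \geq -1/\rme$ elsewhere) gives
\[ \|\mu_y - \hat\mu_y\| \;=\; \mu_y\{f_y > T\} \;\leq\; \frac{\rmD(\mu_y\,\|\,\mu) + 1/\rme}{\log T}. \]
Averaging against $\tilde\nu$ and using $\int \rmD(\mu_y\,\|\,\mu)\,\tilde\nu(\d y) \leq 2I$ (the factor $2$ being $1/\nu(L') \leq 2$) bounds the total tail mass by $(2I+1)/\log T$. Choosing $T := \rme^{16(I+1)/\eps}$ makes this at most $\eps/4$.

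Next, sample $y_1,\dots,y_m$ i.i.d.\ from $\tilde\nu$ and set $\hat g_m := \frac1m\sum_j g_{y_j}$, $\bar g := \int g_y\,\tilde\nu(\d y)$, so that $\|\hat g_m\cdot\mu - \bar g\cdot\mu\| = \int |\hat g_m - \bar g|\,\d\mu$. For each fixed $x$ the summands $g_{y_j}(x)$ lie in $[0,T]$ with mean $\bar g(x)$, giving $\mathrm{Var}(\hat g_m(x)) \leq T\bar g(x)/m$ (since $g^2 \leq Tg$). Cauchy--Schwarz and $\int \bar g\,\d\mu \leq 1$ then yield
\[ \bbE\int|\hat g_m - \bar g|\,\d\mu \;\leq\; \int \sqrt{T\bar g(x)/m}\,\mu(\d x) \;\leq\; \sqrt{T/m}. \]
With $m = \lceil 16\eps^{-2}\rme^{16(I+1)/\eps}\rceil \geq 16T/\eps^2$, this expectation is at most $\eps/4$.

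Finally, splitting $\mu_{y_j} = \hat\mu_{y_j} + (\mu_{y_j} - \hat\mu_{y_j})$ and applying the triangle inequality to both the empirical average and the mean,
\[ \bbE\Big\|\tfrac{1}{m}\sum_j \mu_{y_j} - \tilde\mu\Big\| \;\leq\; \bbE\|\hat g_m\cdot\mu - \bar g\cdot\mu\| + 2\int \|\mu_y - \hat\mu_y\|\,\tilde\nu(\d y) \;<\; \tfrac\eps4 + \tfrac\eps2 < 2\eps. \]
The probabilistic method produces concrete $y_1,\dots,y_m \in L'$ for which the random quantity on the left-hand side is below its mean, and combining with the first step gives $\|\frac1m\sum_j\mu_{y_j} - \mu\| < 3\eps$. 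The delicate point is the simultaneous choice of $T$ and $m$: enlarging $T$ kills the truncation error but inflates the variance term $\sqrt{T/m}$, and the exponential dependence $m \asymp \eps^{-2}\rme^{O(I/\eps)}$ is exactly what is needed to drive both contributions below $\eps$.
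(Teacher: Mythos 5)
Your proposal is correct and follows exactly the approach the paper sketches (and attributes to \cite[Proposition 7.2]{Aus--WP}): choose $y_1,\dots,y_m$ at random from the restricted prior, truncate the densities $\d\mu_y/\d\mu$, bound the truncation error via the entropy tail bound using $\int \rmD(\mu_y\,\|\,\mu)\,\nu(\d y) = I$ from~\eqref{eq:KL-chain5}, and control the empirical average by an $L^1$ variance estimate. The only microscopic slip is writing $\|\mu_y - \hat\mu_y\| = \mu_y\{f_y > T\}$ when this is in fact an inequality ($\|\mu_y - \hat\mu_y\| = \int (f_y - T)_+\,\d\mu \le \mu_y\{f_y>T\}$), but the inequality is all that's used, so the argument goes through.
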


This lemma is taken directly from~\cite[Proposition 7.2]{Aus--WP}.  To prove it, the elements $y_1$, \dots, $y_m$ are chosen independently at random from $\nu$ and shown to have the desired property with high probability.  The details are similar to elementary proofs of the law of large numbers: a truncation followed by a variance estimate.  In~\cite{Aus--WP}, the value $I$ in this lemma is written as the $\nu$-integral of the KL divergences $\rmD(\mu_\bullet\,\|\,\mu)$, but that integral is equal to $\rmI(\nu,\mu_\bullet)$ by~\eqref{eq:KL-chain5}. Also, in~\cite[Proposition 7.2]{Aus--WP} it is assumed that $K$ is standard Borel, but a quick check shows that this assumption plays no role in the proof.

By applying Lemma~\ref{lem:sampling} to the mixture in Theorem A, we obtain the following variant of that theorem.

\begin{thm}\label{thm:A2}
In the setting of Theorem A, fix parameters $\eps,\delta > 0$ and let $\mu$ be a probability measure on $\prod_i K_i$.  If $\DTC(\mu) \leq \delta^3 n$, then $\mu$ may be written as a mixture
\[\mu = \frac{1}{m}\mu'_1 + \cdots + \frac{1}{m} \mu'_m\]
so that
\begin{enumerate}
	\item[(a)] $m = O(\eps^{-2})\exp (O(\DTC/\eps))$, and
	\item[(b)] there are product measures $\xi'_1$, \dots, $\xi'_m$ on $\prod_i K_i$ such that
	\[\frac{1}{m}\sum_{j=1}^m\ol{d_n}(\mu'_j,\xi'_j) < 3\eps + 4\delta/\eps.\]
\end{enumerate}
	\end{thm}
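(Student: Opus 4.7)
The plan is to combine Theorem~A with Lemma~\ref{lem:sampling}. First I would apply Theorem~A at parameter $\delta$, yielding
\[\mu = \int_L \mu_y\,\nu(\d y)\]
with $\rmI(\nu,\mu_\bullet) \leq \DTC(\mu)$ and a measurable family of product measures $(\xi_y)_{y\in L}$ satisfying $\int_L \ol{d_n}(\mu_y,\xi_y)\,\nu(\d y) < 2\delta$. Markov's inequality applied to this last integral singles out the ``good'' subset $L' := \{y \in L : \ol{d_n}(\mu_y,\xi_y) < 4\delta/\eps\}$, for which $\nu(L') > 1 - \eps/2$; this is precisely the threshold hypothesis of Lemma~\ref{lem:sampling}.

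Next I would apply Lemma~\ref{lem:sampling} with tolerance $\eps$ and the good subset $L'$ to produce $y_1,\dots,y_m \in L'$ with
\[m = \lceil 16\eps^{-2}\rme^{16(\rmI(\nu,\mu_\bullet)+1)/\eps}\rceil = O(\eps^{-2})\exp\bigl(O(\DTC(\mu)/\eps)\bigr),\]
matching~(a), and with $\sigma := \frac{1}{m}\sum_j \mu_{y_j}$ satisfying $\|\sigma - \mu\| < 3\eps$. Provisionally setting $\mu'_j := \mu_{y_j}$ and $\xi'_j := \xi_{y_j}$, the construction of $L'$ gives $\frac{1}{m}\sum_j \ol{d_n}(\mu'_j,\xi'_j) < 4\delta/\eps$; but this only yields the approximate identity $\frac{1}{m}\sum_j \mu'_j = \sigma \approx \mu$, whereas the theorem demands exact equality.

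The final task is therefore to promote this approximate representation into an honest mixture. Since $\|\mu - \sigma\| < 3\eps$, the signed measure $\mu - \sigma$ has positive and negative parts each of mass at most $3\eps/2$; these can be absorbed by modifying at most $\lceil 3\eps m\rceil$ of the $\mu'_j$'s, replacing each by a probability measure chosen via a Hahn--Jordan redistribution so that $\sum_j \mu'_j$ becomes exactly $m\mu$ while every term remains a probability measure. Each such replacement can shift $\ol{d_n}(\mu'_j,\xi'_j)$ by at most $1$ (the diameter of $d_n$), so the extra contribution to the averaged bound is at most $3\eps$, giving the promised $3\eps + 4\delta/\eps$ in~(b).

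I expect the main obstacle to be this last step: positivity of the replacement measures is not automatic, since the individual $\mu_{y_j}$'s need not dominate $(\sigma-\mu)_+$ pointwise, so a naive ``add $\mu - \sigma$ to every $\mu'_j$'' fails. The cleanest remedy is to isolate the positive and negative parts of $\mu-\sigma$, choose $\lceil 3\eps m\rceil$ replacement slots, and place a single explicit probability measure in those slots that exactly makes up the deficit --- using only the trivial bound $\ol{d_n}\le 1$ on these modified terms, absorbed into the $3\eps$ slack already budgeted from Lemma~\ref{lem:sampling}.
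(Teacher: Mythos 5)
Your first three steps (apply Theorem~A; carve out the good set $L'$ by Markov; invoke Lemma~\ref{lem:sampling}) match the paper exactly.  The gap is in the final step, and it is a real one: you cannot in general repair the approximate identity $\frac{1}{m}\sum_j \mu_{y_j} \approx \mu$ by modifying only $\lceil 3\eps m\rceil$ of the terms while leaving the others untouched.  If you keep $m-r$ of the $\mu_{y_j}$ unchanged and replace the other $r$, the replaced slots must carry the non-negative measure $m\mu - \sum_{j\ \mathrm{kept}}\mu_{y_j} = m(\mu - \sigma) + \sum_{j\ \mathrm{replaced}}\mu_{y_j}$, and there is no reason for this to be non-negative.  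Concretely: if all the $\mu_{y_j}$ are identical (hence all equal to $\sigma$) and $\mu$ is a point mass with $\sigma = (1-\eps)\mu + \eps\delta_1$ for some $\delta_1 \perp \mu$, then the kept part already assigns mass $(m-r)\eps > 0$ to a set where $m\mu$ assigns zero, so no choice of $r < m$ replacement measures can fix the sum.  The defect $\sigma-\mu$ is spread proportionally over \emph{all} the terms, so you are forced to adjust every $\mu_{y_j}$ a little rather than a few of them a lot.

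The paper handles this by pushing the correction through a coupling rather than a Hahn--Jordan split.  Since $\|\gamma - \mu\| < 3\eps$ for $\gamma := \frac{1}{m}\sum_j \mu_{y_j}$ and $\ol{d_n}$ is dominated by total variation, there is a coupling $\l$ of $\mu$ and $\gamma$ with $\int d_n\,\d\l < 3\eps$.  Setting $\rho_j := \d\mu_{y_j}/\d\gamma$ (well defined since $\mu_{y_j} \le m\gamma$) and $\l_j(\d x,\d y) := \rho_j(y)\,\l(\d x,\d y)$, one takes $\mu'_j$ to be the first marginal of $\l_j$.  Then $\frac{1}{m}\sum_j \rho_j \equiv 1$ gives $\frac{1}{m}\sum_j\l_j = \l$, hence $\frac{1}{m}\sum_j\mu'_j = \mu$ \emph{exactly}, with each $\mu'_j$ automatically a probability measure; and $\l_j$ couples $\mu'_j$ to $\mu_{y_j}$, so $\frac{1}{m}\sum_j \ol{d_n}(\mu'_j,\mu_{y_j}) \le \frac{1}{m}\sum_j\int d_n\,\d\l_j = \int d_n\,\d\l < 3\eps$.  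Combined with the triangle inequality and your bound on $\ol{d_n}(\mu_{y_j},\xi_{y_j})$, this gives the claimed $3\eps + 4\delta/\eps$.  Your instinct that positivity is ``the main obstacle'' was exactly right; the coupling construction is the device that makes positivity automatic.
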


\begin{Proof}
	Let
	\[\mu = \int_L \mu_y\,\nu(\d y)\]
	be the representation of $\mu$ promised by Theorem A.  By part (b) of Theorem A and Markov's inequality, the set
	\[L' := \big\{y \in L:\ \ol{d_n}(\mu_y,\xi_y) < 4\delta/\eps\big\}\]
	has $\nu(L') > 1 - \eps/2$.  Therefore Lemma~\ref{lem:sampling} gives $y_1,\dots,y_m \in L'$ such that~\eqref{eq:TV-close} holds, where $m$ satisfies part (a) of the present theorem.
	
	We have not quite found a representation of $\mu$ as a finite mixture yet, but~\eqref{eq:TV-close} says we are close.  Let
	\begin{equation}\label{eq:gamma}
	\g := \frac{1}{m}\sum_{j=1}^m\mu_{y_j}.
	\end{equation}
	Since $\ol{d_n}$ is always bounded by total variation (see, for instance,~\cite[Subsection 4.2]{Aus--WP}), it follows from~\eqref{eq:TV-close} that some coupling $\l$ of $\mu$ and $\g$ satisfies
	\[\int d_n\,\d\l < 3\eps.\]
	For each $j=1,2,\dots,m$, define
	\[\rho_j := \d\mu_{y_j}/\d\g \quad \hbox{and}\quad \l_j(\d x,\d y) := \rho_j(y)\cdot \l(\d x,\d y),\]
	and let $\mu'_j$ be the first marginal of $\l_j$.  Then the average
	\begin{equation}\label{eq:ave}
\frac{1}{m}\sum_{j=1}^m\mu'_j(\d x)
\end{equation}
	is the first marginal of
	\[\frac{1}{m}\sum_{j=1}^m\l_j(\d x,\d y) = \Big(\frac{1}{m}\sum_{j=1}^m\rho_j(y)\Big)\cdot \l(\d x,\d y) = \l(\d x,\d y),\]
	where the second equality holds because of~\eqref{eq:gamma}.  Therefore the average~\eqref{eq:ave} equals $\mu$ exactly.
	
	To finish, let us prove part (b) of the present theorem for these measures $\mu_j'$.  Let $\xi'_j := \xi_{y_j}$ for each $j$.  By construction, $\l_j$ is a coupling of $\mu'_j$ and $\mu_{y_j}$ for each $j$, and therefore
	\begin{align*}
	\frac{1}{m}\sum_{j=1}^m \ol{d_n}(\mu'_j,\xi'_j) &\leq \frac{1}{m}\sum_{j=1}^m \Big[\int d_n\,\d\l_j + \ol{d_n}(\mu_{y_j},\xi_{y_j})\Big] \\
	& = \int d_n\,\d\l + \frac{1}{m}\sum_{j=1}^m\ol{d_n}(\mu_{y_j},\xi_{y_j})\\
	& < 3\eps + 4\delta/\eps,
	\end{align*}
	where the last line follows by our initial choice of $\l$ and the definition of $L'$.
	\end{Proof}

\bigskip

By taking $\eps := \sqrt{\delta}$ in Theorem~\ref{thm:A2}, we obtain $m = O(\delta^{-1})\exp(\DTC(\mu)/\sqrt{\delta})$ and
\[\frac{1}{m}\sum_{j=1}^m\ol{d_n}(\mu'_j,\xi'_j) = O(\sqrt{\delta}).\]
I do not know whether this dependence on $\delta$ can be qualitatively improved.

A different approach to obtaining a finite mixture is available if each $K_i$ is finite, but it gives bounds that depend on their cardinalitites.  Suppose for simplicity that these cardinalities are all at most $k$. Then we can make the following slight adjustment within the proof of Theorem A itself.  Consider again the mixture~\eqref{eq:canon-mix}.  If we simply take this for the mixture produced by the theorem (rather than projecting to $K_{S^{\rm{c}}}$, bounding the mutual information, and lifting back to $K_{[n]}$), then a different bound results.  The number of terms in the mixture~\eqref{eq:canon-mix} is at most
\[|L| = \prod_{i\in S}|K_i| \leq k^{\DTC(\mu)/\delta^2}.\]
Now, for each $y \in L$, let $\xi_y$ be the product measure on $\prod_i K_i$ with the same marginals as $\nu_y$. As in the proof of Theorem A, we have the estimates
\begin{align*}
	\int_L\ol{d_n}\big(\nu_y,\xi_y)\ \mu_S(\d y) &\leq \int_L\sqrt{\frac{1}{2n}\TC(\nu_y)}\ \mu_S(\d y)\\
	& \leq \sqrt{\frac{1}{2n}\TC(X_1\,;\,\dots\,;\,X_n\,|\,X_S)} \leq \delta/\sqrt{2} < \delta.
\end{align*}
Thus we obtain the following alternative to Theorem A.

\begin{thm}\label{thm:A'}
Assume that each $K_i$ has cardinality at most $k$.  Fix a parameter $\delta > 0$.  Then any $\mu \in \Pr(\prod_i K_i)$ may be written as a mixture
	\begin{equation}
		\mu = p_1\mu_1 + \dots + p_m\mu_m
	\end{equation}
	in which
	\begin{enumerate}
		\item[(a)] $m\leq k^{\DTC(\mu)/\delta^2}$,
		\item[(b)] there are product measures $\xi_j$ on $\prod_i K_i$, $j=1,2,\dots,m$, such that
		\[\sum_{j=1}^m p_j\cdot \ol{d_n}(\mu_j,\xi_j) < \delta.\]
	\end{enumerate}
\end{thm}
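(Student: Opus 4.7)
The plan is to mimic the proof of Theorem A but to stop earlier: instead of projecting the canonical conditional mixture down to $K_{S^{\rm{c}}}$, bounding its mutual information, and lifting back via Radon--Nikodym derivatives, I would simply keep the conditional mixture as it stands. In the finite-alphabet setting the alphabet of $X_S$ is automatically bounded, so the mixture is automatically finite with an explicit complexity estimate in terms of $k$.

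Concretely, first I would apply Lemma~\ref{lem:DTC-ave} to $\mu$ to produce a subset $S \subseteq [n]$ with $|S| \leq \DTC(\mu)/\delta^2$ satisfying
\[\TC(X_1;\ldots;X_n \mid X_S) \leq \delta^2 |[n]\setminus S| \leq \delta^2 n.\]
Let $L := \prod_{i \in S} K_i$ and let $(\nu_y : y \in L)$ be a conditional distribution for $X_{[n]}$ given $X_S$, so that $\mu = \sum_{y \in L} \mu_S(\{y\}) \, \nu_y$. Taking the $\nu_y$'s as the mixture components and $p_y := \mu_S(\{y\})$ as their weights, we immediately get a finite mixture with at most $|L| \leq k^{|S|} \leq k^{\DTC(\mu)/\delta^2}$ terms, establishing part~(a).

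For part~(b), for each $y \in L$ I would let $\xi_y$ be the product of the one-dimensional marginals of $\nu_y$ on $\prod_i K_i$. Since $\nu_y$ is a Dirac mass on the coordinates indexed by $S$, the measure $\xi_y$ agrees with $\nu_y$ on those coordinates, and Corollary~\ref{cor:Marton} applied to $\nu_y$ directly on $\prod_i K_i$ gives
\[\ol{d_n}(\nu_y, \xi_y) \leq \sqrt{\TC(\nu_y)/(2n)}.\]
Averaging against $\mu_S$, then combining H\"older's inequality with the disintegration identity from Proposition~\ref{prop:TC-DTC-disint}, yields
\[\sum_{y \in L} \mu_S(\{y\}) \, \ol{d_n}(\nu_y, \xi_y) \leq \sqrt{\TC(X_1;\ldots;X_n \mid X_S)/(2n)} \leq \delta/\sqrt{2} < \delta,\]
which is part~(b).

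There is no single hard step: once the right $S$ has been extracted from Lemma~\ref{lem:DTC-ave}, everything that follows is a routine combination of Corollary~\ref{cor:Marton}, Jensen/H\"older, and Proposition~\ref{prop:TC-DTC-disint}. The only point that deserves a brief sanity check is that one may apply Corollary~\ref{cor:Marton} on the full product $\prod_i K_i$ (with its factor of $n$ in the denominator) rather than on $\prod_{i \in S^{\rm{c}}} K_i$: this works precisely because the $S$-marginals of $\nu_y$ are Dirac, so they contribute nothing to $\TC(\nu_y)$ or to $\ol{d_n}(\nu_y,\xi_y)$, and one therefore needs only the weaker TC-half of Lemma~\ref{lem:DTC-ave} rather than its full TC${}+{}$DTC conclusion.
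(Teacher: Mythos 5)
Your proof is correct and is essentially identical to the paper's own argument: keep the canonical conditional mixture~\eqref{eq:canon-mix} over $L=\prod_{i\in S}K_i$ rather than projecting to $K_{S^{\rm{c}}}$ and lifting back, bound $|L|\le k^{|S|}$ for part~(a), and then apply Corollary~\ref{cor:Marton}, Cauchy--Schwarz/H\"older, Proposition~\ref{prop:TC-DTC-disint}, and the TC-half of Lemma~\ref{lem:DTC-ave} for part~(b). Your sanity check that Corollary~\ref{cor:Marton} may be applied on the full product $\prod_i K_i$ (because the $S$-coordinates of $\nu_y$ are Dirac and contribute nothing to $\TC(\nu_y)$ or to $\ol{d_n}$) matches the paper's implicit use of it, and is a worthwhile clarification.
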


\begin{rmk}
	Note that Theorem~\ref{thm:A'} can be applied for any value of $\DTC(\mu)$, and gives a nontrivial conclusion provided $\DTC(\mu) \ll \delta^2 n$. This is slightly less restrictive than Theorem A itself, which requires $\DTC(\mu) \leq \delta^3 n$.  The extra power of $\delta$ is needed in the proof of Theorem A because the cardinality $|S|$ shows up in the estimate~\eqref{eq:A-end}, and we need this to be comparable to the integral of $\ol{d_{S^{\rm{c}}}}\big((\mu_y)_{S^{\rm{c}}},\xi'_y)$ in that estimate. Estimate~\eqref{eq:A-end} is not used in the proof of Theorem~\ref{thm:A'}.
\end{rmk}

\section{Directions for further research}

\subsection{`Very small' alphabets}

Theorems A and~\ref{thm:A2} do not depend on $\max_i |K_i|$ --- indeed, they allow infinite alphabets. But Theorem~\ref{thm:A'} does depend on $\max_i |K_i|$.

One can also ask after `very small' alphabets.  This could mean, for instance, that $K_i = \{0,1\}$ for each $i$, and also that $\mu$ is very biased in every coordinate, say
\[\mu\{x_i=1\} \leq p \quad \forall i,\]
where $p \in (0,1)$ is small.

\begin{ques}
 Do our estimates in Part II have any natural refinements if one also allows them to depend on $p$?
	\end{ques}

\subsection{Finer approximations than in transportation}

As discussed in the Introduction, our stability results for TC and DTC are incomplete.  They promise a certain structure only up to a perturbation in $\ol{d_n}$, but neither TC nor DTC is uniformly continuous for that kind of perturbation (TC does satisfy Lemma~\ref{lem:TC-robust}, but that estimate depends on the size of the alphabets).

\begin{ques}
 Is there a finer kind of approximation than $\ol{d_n}$ which enables a complete description of the structure behind small values of TC or DTC?
\end{ques}

The simplest possibility to consider would be approximation in total variation, but I expect this is too strong.

\subsection{Gaps in other entropy inequalities}

The facts that TC and DTC are non-negative can both be subsumed into a much more general inequality of Shearer from~\cite{ChungGraFraShe86}.  If $\cal{S}$ is a family of subsets of $[n]$ with the property that every member of $[n]$ lies in at least $k$ members of $\cal{S}$, then Shearer's inequality asserts that
\begin{equation}\label{eq:Shearer}
\rmH(X_1,\dots,X_n) \leq \frac{1}{k}\sum_{S \in \cal{S}}\rmH(X_i:\ i\in S).
\end{equation}
Han~\cite{Han78} established this previously in case $\cal{S} = {[n]\choose \ell}$ for some $\ell$.  Among these special cases, TC is the gap in~\eqref{eq:Shearer} when $\cal{S} = {[n] \choose 1}$. After a simple application of the chain rule, DTC is the gap in~\eqref{eq:Shearer} when $\cal{S} = {[n]\choose n-1}$ (up to a normalization by $n-1$).  Several other relations among the gaps in Han's inequalities were explored by Fujishige~\cite{Fuj78}.

By now, Shearer's inequality has opened a rich vein of applications in combinatorics: see, for instance,~\cite{Radha01}.  More recently it has been refined further by Madiman and Tetali~\cite{MadTet10} and Balister and Bollob\'as~\cite{BalBol12}.  Madiman and Tetali's paper gives a more careful overview of other work in this direction.  From any of these inequalities, one can define a new notion of multi-variate correlation by considering the gap between the two sides.  Madiman and Tetali also explicitly introduce the gaps in their inequalities in~\cite[Section VII]{MadTet10}, and establish some identities between them, generalizing Fujishige's work.

As far as I know these investigations do not include most of our work from Part I above, let alone Part II.  Even the fact of monotonicity, which generalizes non-negativity, does not seem to be in the literature.

\begin{ques}
 Do the inequalities and identities from Part I have natural generalizations to the gaps in some of the other inequalities mentioned above?
\end{ques}

\begin{ques}
 Do any of those other gaps enjoy stability or structural results of a similar flavour to Theorem A?
\end{ques}

It would be especially interesting if the right choice of gap could be used to capture other structural features than near-products or their low-information mixtures.

All of the entropy inequalities mentioned in the previous subsection are `Shannon inequalities', meaning that they are corollaries of the strong subadditivity of Shannon entropy.  More recently, other `non-Shannon inequalities' have also been investigated: see~\cite{MakMakRomVer02,Mat07,ZhaYeu98} for an introduction to these.  The paper~\cite{FritzCha13} uses both Shannon and non-Shannon inequalities to derive necessary conditions for the existence of a joint probability distribution given some constraints on its marginals, and the paper~\cite{DouFreZeg07} applies some non-Shannon inequalities to a network information theory model.

\begin{ques}
 Do any of these non-Shannon inequalities admit natural stability results?
\end{ques}

\section*{Acknowledgements}
\small

I am grateful to Guido Mont\'ufar for insightful discussions and helpful guidance to the literature.  Two anonymous referees prompted several improvements to the paper and provided further references.

\makesubmdate

\def\cprime{$'$} \def\cprime{$'$}

\makecontacts

\end{document}